\renewcommand\eqref[1]{(\ref{#1})} 
\numberwithin{equation}{section}
\theoremstyle{plain}
\newtheorem{thm}{Theorem}[section]
\newtheorem{cor}[thm]{Corollary}
\newtheorem{lem}[thm]{Lemma}
\theoremstyle{definition}
\newcommand{\Rn}{\mathbb R^{n}}
\begin{document}

   \title[Trace formulae for degenerate parabolic equations]
   {Trace formulae of potentials for degenerate parabolic equations}

\author[M. Karazym]{Mukhtar Karazym}
\address{
  Mukhtar Karazym:
  \endgraf
  Department of Mathematics
  \endgraf
Nazarbayev University, Kazakhstan
  \endgraf
  {\it E-mail address} {\rm mukhtar.karazym@nu.edu.kz}
  }

 \author[D. Suragan]{Durvudkhan Suragan}
\address{
	Durvudkhan Suragan:
	\endgraf
	Department of Mathematics
	\endgraf
Nazarbayev University, Kazakhstan
	\endgraf
	{\it E-mail address} {\rm durvudkhan.suragan@nu.edu.kz}
} 

\thanks{The authors were supported
		by the Nazarbayev University program 091019CRP2120. No new data was collected or generated during the course of research.}

     \keywords{degenerate parabolic equation, Poisson integral, layer potential, Fourier transform}
     \subjclass[2010]{47G40, 35K65}

     \begin{abstract} In this paper, we analyze main properties of the volume and layer potentials as well as the Poisson integral for a  multi-dimensional degenerate parabolic equation. As consequences, we obtain trace formulae of the heat volume potential and the Poisson integral which solve Kac's problem for degenerate parabolic equations in cylindrical domains.
      \end{abstract}
     \maketitle

     \tableofcontents

\section{Introduction}
The layer potential method (or potential theory) for parabolic equations has a long history (see, e.g. \cite{FR}) and it has been intensively applied to solve initial and initial-boundary value problems of  parabolic partial differential equations throughout the last decades. To construct the method, elements of the potential theory, namely, the (heat) volume potential/Poisson integral , the single layer potential and the double layer potential play a key role. Although many of the basic ideas of the potential theory already exist and are intensively being studied, still specific (nonclassical) partial differential equations are required for their development and new approaches.

In \cite{M1}, the author studied the following one-dimensional  degenerate-type parabolic equation in the semi-infinite domain
\begin{equation}\label{malyshev1}
\frac{\partial u(x,t)}{\partial t}-a(t)\frac{\partial^{2} u(x,t)}{\partial x^{2}}=f(x,t),\quad x>0,~t>0,
\end{equation} 
where $f(x,t)$ is bounded in  the strip $\mathbb{R}\times[0,T]$, $0<T<\infty$. 

Here the coefficient $a(t)$ satisfies one of the following two assumptions: 
\begin{itemize}
\item[\emph{i.}] $a(t)$ is nonnegative and becomes zero only at isolated points;

\item[\emph{ii.}] A function $a_{1}(t)$ defined by
\begin{equation*}
a_{1}(t):=\int\limits_{0}^{t} a(z)\,dz
\end{equation*}
is positive for all $t>0$, allowing $a(t)$ to be negative in an interval. 
\end{itemize}

In particular, the author obtained solutions of the initial boundary value problems for equation \eqref{malyshev1} by using the potential theory.

The goal of the present paper is to construct the potential theory for the multi-dimensional version of the degenerate parabolic equation \eqref{malyshev1} and to analyse its consequences. To achieve this aim, first by using the Fourier transform we find the fundamental solution of the  multi-dimensional degenerate parabolic equation in an explicit form. Then we develop ``degenerate" potential theory, which is based on a use of the explicit representation of the fundamental solution for analysing, in this setting, a complete parallel of the classical heat potential and regularity theory. Note that our ideas are also closely related to the recent development on the potential theory of hypoelliptic differential equations (see \cite[Chapter 11]{RS1}). 

Thus, in this paper, we present ``degenerate" versions of the volume (heat) potential, the Poisson integral, the double and single layer potentials. In addition, their main properties will be discussed in details. As consequences, we consider Cauchy problems and initial-boundary value problems in cylindrical domains.

Moreover, we are also interested in the question that what boundary condition can be put on the ``degenerate'' volume potential (and Poisson integral) on the lateral boundary of the cylindrical domain so that the degenerate parabolic equation with this boundary condition would have a unique solution in the cylindrical domain, which is still given by the same formula of the ``degenerate'' volume potential (and Poisson integral, correspondingly). In turn, it allows finding the trace of the ``degenerate'' volume potential (and Poisson integral) to the lateral boundary of the cylindrical domain. So, in the present paper,  boundary conditions for the ``degenerate'' volume potential and Poisson integral are established. The obtained boundary conditions are nonlocal in the space variables.  In the one-dimensional case, this problem was studied in \cite{SO}. The multi-dimensional version gives a new insight, that is, the constructed new (nonlocal) initial-boundary value problem can serve as an example of an explicitly solvable initial-boundary value problem in any cylindirical domain (with a smooth lateral surface) for the degenerate parabolic equation. 

Note that the origin of the question goes back to M. Kac's lecture \cite{Kac} (cf. \cite{KS1} and \cite{KS2}). Therefore, the analogical questions for the elliptic and hypoelliptic cases are called Kac's problems. For discussions in this direction, we refer \cite[Chapter 11]{RS1}
as well as references therein.

The brief outline of the paper is as follows: in Section \ref{sec2}, we discuss Cauchy problems for the multi-dimensional degenerate parabolic equation and find its fundamental solution explicitly. We prove the existence and uniqueness theorems for the Cauchy problems.  In Section \ref{sec3}, we analyse layer potentials, in particular, we obtain continuity results and jump relations. Finally, in Section \ref{sec4}, we present trace formulae for the volume potential and Poisson integral.

\section{Fundamental solution and Cauchy problems}\label{sec2}
We consider the degenerate parabolic equation 
\begin{equation}\label{e1.1}
\lozenge_{a}u(x,t):=\frac{\partial u(x,t)}{\partial t}-a(t)\Delta_{x}u(x,t) = f(x,t),
\end{equation}
posed in a cylindrical domain $(x,t)\in\Omega \times(0,T)$, $0<T<\infty$, where the domain $\Omega$ is bounded in $\mathbb{R}^n$, $n\geq2$, with Lyapunov boundary $\partial \Omega \in C^{1+\lambda},~0<\lambda<1$, $f$ is any given function.
Here and throughout this paper the coefficient $a(t)\in L_{1}[0,T]$ is defined in $[0,T]$ and satisfies one of the following two assumptions:

(a) $a(t)$ is nonnegative and becomes zero only at isolated points;

(b) A function $a_{1}(t)$ defined by
\begin{equation*}
a_{1}(t):=\int\limits_{0}^{t} a(z)dz 
\end{equation*}
is positive for all $t>0$, allowing $a(t)$ to be negative in an interval. 

In our computations, we also use a function $b(t,\tau)$ defined by the formula
\begin{equation*}
b(t,\tau):=\int\limits_\tau^t a(z)dz=a_{1}(t)-a_{1}(\tau), \quad (b(t,0)=a_{1}(t)).
\end{equation*}
Note that if $a(t)$ satisfies the assumption (a), then $b(t,\tau)$ is positive for all $t>\tau>0$.

First of all, we present the fundamental solution of equation \eqref{e1.1} by using the Fourier transform in an explicit form. 
\begin{lem} \label{lem2.1}
	Under the assumption (b) the fundamental solution of equation \eqref{e1.1} can be represented as
	\begin{equation} \label{e2.1}
	\varepsilon_{n, b}( x,t):=\varepsilon_{n}\big( x,a_{1}(t)\big)=\frac{\theta(t) e^{-\frac{|x|^{2}}{4a_{1}(t)}}}{\big(4\pi a_{1}(t)\big)^{\frac{n}{2}}},\quad (x,t)\in\Rn\times\mathbb{R},
	\end{equation}
	where $\varepsilon_{n}$ is the fundamental solution of the standard heat  operator, $\theta$ is the Heaviside function and $|x|=\sqrt{x_{1}^{2}+...+x_{n}^{2}}$ is the usual Euclidean norm.
	\end{lem}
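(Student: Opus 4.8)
The plan is to pass to the Fourier transform in the spatial variable, which turns the degenerate parabolic equation into a first-order linear ordinary differential equation in $t$ with a Dirac forcing, to solve that ODE explicitly, and then to invert the transform by evaluating a Gaussian integral; at the end I would double back and verify directly that the resulting function is indeed a fundamental solution.

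Recall that $\varepsilon_{n,b}$ must satisfy $\lozenge_{a}\varepsilon_{n,b}=\delta$ in the sense of distributions on $\Rn\times\mathbb{R}$, together with the causality condition $\varepsilon_{n,b}\equiv 0$ for $t<0$ (which singles out the forward fundamental solution). First I would apply the Fourier transform $x\mapsto\xi$; using $\widehat{\Delta_{x}u}=-|\xi|^{2}\widehat{u}$ and that the Fourier transform of $\delta(x)$ equals $1$, equation \eqref{e1.1} with right-hand side $\delta$ becomes
\begin{equation*}
\partial_{t}\widehat{\varepsilon}_{n,b}(\xi,t)+a(t)|\xi|^{2}\widehat{\varepsilon}_{n,b}(\xi,t)=\delta(t).
\end{equation*}
Since $a\in L_{1}[0,T]$, the function $a_{1}(t)=\int_{0}^{t}a(z)\,dz$ is absolutely continuous with $a_{1}(0)=0$, so for fixed $\xi$ the integrating factor $e^{|\xi|^{2}a_{1}(t)}$ is well defined; combining it with causality and with the unit jump at $t=0$ produced by the forcing $\delta(t)$ yields $\widehat{\varepsilon}_{n,b}(\xi,t)=\theta(t)\,e^{-|\xi|^{2}a_{1}(t)}$.

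Next I would invert the transform. For $t>0$ assumption (b) gives $a_{1}(t)>0$, so $\xi\mapsto e^{-|\xi|^{2}a_{1}(t)}$ is a genuine Gaussian and, completing the square in $\xi$,
\begin{equation*}
\varepsilon_{n,b}(x,t)=\frac{\theta(t)}{(2\pi)^{n}}\int_{\Rn}e^{-|\xi|^{2}a_{1}(t)}e^{ix\cdot\xi}\,d\xi=\frac{\theta(t)\,e^{-\frac{|x|^{2}}{4a_{1}(t)}}}{\big(4\pi a_{1}(t)\big)^{\frac{n}{2}}},
\end{equation*}
which is precisely $\varepsilon_{n}\big(x,a_{1}(t)\big)$. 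To make the argument rigorous I would then verify directly that this function is a fundamental solution: for $t>0$ the chain rule together with the fact that $\varepsilon_{n}$ solves the heat equation $\partial_{s}\varepsilon_{n}=\Delta_{x}\varepsilon_{n}$ gives, for a.e.\ $t$,
\begin{equation*}
\partial_{t}\varepsilon_{n}\big(x,a_{1}(t)\big)=a_{1}'(t)\,(\partial_{s}\varepsilon_{n})\big(x,a_{1}(t)\big)=a(t)\,\Delta_{x}\varepsilon_{n}\big(x,a_{1}(t)\big),
\end{equation*}
while, since $a_{1}(t)\to 0^{+}$ as $t\to 0^{+}$, one has $\varepsilon_{n}(\cdot,a_{1}(t))\to\delta$ in $\mathcal{D}'(\Rn)$; combined with the jump of the Heaviside factor across $t=0$ this produces the source $\delta$ at $(0,0)$.

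The main obstacle is the low time-regularity of the coefficient: because $a$ is merely $L_{1}[0,T]$, the map $t\mapsto a_{1}(t)$ is only absolutely continuous, so $\partial_{t}\varepsilon_{n}(x,a_{1}(t))$ exists only for almost every $t$, and all the steps above---in particular commuting the Fourier transform with the time derivative and interpreting the forcing $\delta(t)$ at $t=0$---have to be carried out distributionally; the point is that the a.e.\ identity $\frac{d}{dt}\varepsilon_{n}(x,a_{1}(t))=a(t)(\partial_{s}\varepsilon_{n})(x,a_{1}(t))$ is exactly what is needed for the distributional equation. Assumption (b) is essential and is used twice: it guarantees $a_{1}(t)>0$ for $t>0$, which is what makes the inverse Gaussian integral converge and what forces the concentration $\varepsilon_{n}(\cdot,a_{1}(t))\to\delta$ as $t\to 0^{+}$.
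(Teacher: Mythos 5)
Your proposal is correct and follows essentially the same route as the paper: apply the Fourier transform in $x$, solve the resulting first-order ODE with Dirac forcing to get $\widehat{\varepsilon}(\xi,t)=\theta(t)e^{-|\xi|^{2}a_{1}(t)}$, and invert the Gaussian. Your added direct verification and the discussion of the low regularity of $a$ are extra rigor on top of the paper's terser argument, not a different method.
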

	\begin{proof}[Proof of Lemma \ref{lem2.1}] Consider the equation
		\begin{equation}\label{e2.2}
		\frac{\partial \varepsilon(x,t)}{\partial t}-a (t)\Delta_{x}\varepsilon(x,t)=\delta(x)\delta(t),\quad (x,t)\in\mathbb{R}^{n}\times\mathbb{R},
		\end{equation}	
		where $\delta$ is the Dirac distribution.
		Under the assumption (b) the fundamental solution of equation \eqref{e1.1} can be explicitly found by using the Fourier
		transform. So, applying the Fourier transform $F_{x}$ to  equation \eqref{e2.2}, we obtain that
		
		\begin{equation} \label{e2.3}
		\frac{\partial \widetilde{\varepsilon}(\xi,t)}{\partial t}+a(t)|\xi|^{2} \widetilde{\varepsilon}(\xi,t)=1(\xi)\delta(t),\quad (\xi,t)\in\mathbb{R}^{n}\times\mathbb{R},
		\end{equation}
		where
		  $$\widetilde{\varepsilon}(\xi,t)=F_{x}[\varepsilon](\xi,t)=\int_{\mathbb{R}^{n}} \varepsilon(x,t)e^{i\langle \xi,x \rangle}dx,\quad (i^2=-1),$$
		$1(\xi)$ is the identity function in $\Rn$ and the inner product in $\mathbb{R}^{n}$ is denoted by $\langle \cdot, \cdot\rangle$. The solution of  equation \eqref{e2.3} is 
		\begin{equation*}
		\widetilde{\varepsilon}(\xi,t)=\theta(t)e^{-|\xi|^{2}a_{1}(t)},\quad (\xi,t)\in\mathbb{R}^{n}\times\mathbb{R}.
		\end{equation*}
		Applying the inverse Fourier transform and its properties to the solution of equation \eqref{e2.3}, we obtain  \eqref{e2.1}. This completes the proof.
	\end{proof}

Note that the assumption (a) is the special case of the assumption (b).

With  substitution of the variables $\xi_{i}=\frac{x_{i}}{2\sqrt{a_{1}(t)}},~i=1,...,n$,
we have 
\begin{equation}\label{pr1}
\begin{aligned}
&\int_{\mathbb{R}^{n}}\varepsilon_{n, b}(x,t)dx=\frac{1}{\big(4\pi a_{1}(t)\big)^{\frac{n}{2}}}\int_{\mathbb{R}^{n}}e^{-\frac{|x|^{2}}{4a_{1}(t)}}dx\\
&=\prod_{i=1}^{n}\frac{1}{\pi^{\frac{n}{2}}}\int_{-\infty}^{\infty}e^{-\xi_{i}^{2}}d\xi_{i}=1, \quad t>0.
\end{aligned}
\end{equation}

Moreover, the fundamental solution $\varepsilon_{n, b}(x,t)$ has the property

\begin{equation}\label{pr2}
\varepsilon_{n, b}(x,t)\to \delta(x) \quad \mbox{ with }~ t \to 0+,
\end{equation}
for all $x\in \mathbb{R}^{n}$. 

Let us show \eqref{pr2}. Let $\psi$ be an infinitely many times differentiable function in $\mathbb{R}^{n}$ with compact support.
Then by using the polarization formula 
\begin{equation*}
\int_{\mathbb{R}^{n}}\widetilde{f}(|x|)dx=\omega_{n}\int_{0}^{\infty}\widetilde{f}(r)r^{n-1}dr,
\end{equation*}
where $\widetilde{f}$ is any integrable function in $\Rn$, $\omega_{n}=\frac{2\pi^{\frac{n}{2}}}{\Gamma\big(\frac{n}{2}\big)}$ and by using the mean value theorem, we obtain 
\begin{align*}
&\bigg|\int_{\mathbb{R}^{n}}\varepsilon_{n, b}(x,t)\big(\psi(x)-\psi(0)\big)dx\bigg|\leq \frac{A}{\big(4\pi a_{1}(t)\big)^{\frac{n}{2}}}\int_{\mathbb{R}^{n}}e^{-\frac{|x|^{2}}{4a_{1}(t)}}|x|dx\\
&=\frac{ A \omega_{n}}{\big(4\pi a_{1}(t)\big)^{\frac{n}{2}}}\int_{0}^{\infty}e^{-\frac{r^{2}}{4a_{1}(t)}}r^{n}dr=\frac{2A \omega_{n} \sqrt{a_{1}(t)}}{\pi^{\frac{n}{2}}}\int_{0}^{\infty}e^{-u^2}u^{n}du\\
&=2A\sqrt{a_{1}(t)},
\end{align*}
where $A$ is a positive constant.
Since  the function $a_{1}(t)$ is  continuous and nonnegative in $[0,T]$, by virtue of \eqref{pr1} , we obtain \eqref{pr2}, that is,
\begin{align*}
&\big(\varepsilon_{n, b}(x,t),\psi(x)\big):=\int_{\mathbb{R}^{n}}\varepsilon_{n, b}(x,t)\psi(x)dx=\psi(0)\int_{\mathbb{R}^{n}}\varepsilon_{n, b}(x,t)dx\\
&+\int_{\mathbb{R}^{n}}\varepsilon_{n, b}(x,t)\big(\psi(x)-\psi(0)\big)dx \to \big(\delta(x),\psi(x)\big):=\psi(0),~\text{as}~t \to 0+.
\end{align*}

Let $\varepsilon_{n, a}(x-\xi,b(t,\tau)):=\varepsilon_{n}(x-\xi,a_{1}(t)-a_{1}(\tau))$.

Since under the assumption (a) we have $b(t,\tau)>0$ for all $t>\tau>0$, it is easy to check that 

\begin{equation}\label{pr3}
\int_{\mathbb{R}^{n}}\varepsilon_{n, a}(x-\xi,b(t,\tau))d\xi=1, \quad t>\tau>0,~x\in\Rn.
\end{equation}

The degenerate parabolic potential defined by
\begin{equation}\label{volume}
(Vf)(x,t):=\int_{0}^{t}\int_{\Omega}
\varepsilon_{n, a}(x-\xi,b(t,\tau))f(\xi,\tau)d\xi d\tau,\quad x\in\Omega,~0<t<T,
\end{equation}
is called the volume potential, where $f$ is bounded in $\Omega\times[0,T]$ with $\operatorname{supp}f(\cdot,t)\subset \Omega$ for all $t\in[0,T]$.
\begin{thm} \label{thm2.2}
	Let $a(t)$ satisfy the assumption (a) and $f$ be a bounded function in the strip $\Omega\times[0,T]$ with $\operatorname{supp}f(\cdot,t)\subset \Omega$ for all $t\in[0,T]$. Then the volume potential with the density $f$ \eqref{volume}  admits the estimate
	\begin{equation}\label{e2.6}
	|(Vf)(x,t)|\leq t\sup_{(\xi,\tau)\in \Omega\times [0,t]}|f(\xi,\tau)|,\quad x\in\Omega,~0<t<T,
	\end{equation}
	 and solves  equation \eqref{e1.1} with the zero initial condition
	\begin{equation}\label{i2.8}
	u(\cdot,t)\to 0~\text{as}~ t \to  0+,\quad \text{in}~\Omega.
	\end{equation} 
\end{thm}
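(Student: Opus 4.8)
The plan is to dispatch the three assertions in increasing order of difficulty, starting with the pointwise bound \eqref{e2.6} and the initial condition \eqref{i2.8}, which are essentially free. For \eqref{e2.6}, in \eqref{volume} I would estimate $|f(\xi,\tau)|\le M_t:=\sup_{(\xi,\tau)\in\Omega\times[0,t]}|f(\xi,\tau)|$, pull $M_t$ out, and use that, since $\operatorname{supp}f(\cdot,\tau)\subset\Omega$, the inner $\xi$-integral is dominated by $\int_{\Rn}\varepsilon_{n,a}(x-\xi,b(t,\tau))\,d\xi=1$ for every $0<\tau<t$ by the normalization \eqref{pr3}; the surviving $\int_0^t d\tau$ produces the factor $t$. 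The zero initial condition \eqref{i2.8} is then immediate, since the right-hand side of \eqref{e2.6} is at most $t\sup_{\Omega\times[0,T]}|f|\to 0$ as $t\to0+$.

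The substantive claim is $\lozenge_a(Vf)=f$ in $\Omega\times(0,T)$. I would write $(Vf)(x,t)=\int_0^t w(x,t;\tau)\,d\tau$ with
\[
 w(x,t;\tau):=\int_\Omega\varepsilon_{n,a}\big(x-\xi,b(t,\tau)\big)f(\xi,\tau)\,d\xi ,
\]
and record two facts. First, for each fixed $\tau$, $(x,t)\mapsto w(x,t;\tau)$ solves the homogeneous degenerate equation $\partial_t w-a(t)\Delta_x w=0$ for $t>\tau$: since $\varepsilon_{n,a}(x-\xi,b(t,\tau))=\varepsilon_n(x-\xi,a_1(t)-a_1(\tau))$, $a_1'(t)=a(t)$ a.e., and $\varepsilon_n(y,s)$ satisfies $\partial_s\varepsilon_n=\Delta_y\varepsilon_n$ for $s>0$, the chain rule gives $\partial_t w=a(t)\Delta_x w$. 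Second, $w(x,t;\tau)\to f(x,t)$ as $\tau\to t-$ for $x$ in the interior of $\Omega$; this is the approximate-identity property, namely the computation behind \eqref{pr2} applied to the Gaussians $\varepsilon_{n,a}(\cdot,b(t,\tau))$ with $b(t,\tau)\to0+$, together with $\operatorname{supp}f(\cdot,t)\subset\Omega$. Differentiating $\int_0^t w(x,t;\tau)\,d\tau$ in $t$ by Leibniz's rule and inserting these two facts yields
\[
 \partial_t(Vf)(x,t)=f(x,t)+\int_0^t\partial_t w(x,t;\tau)\,d\tau=f(x,t)+a(t)\int_0^t\Delta_x w(x,t;\tau)\,d\tau ,
\]
while $a(t)\Delta_x(Vf)(x,t)=a(t)\int_0^t\Delta_x w(x,t;\tau)\,d\tau$; subtracting gives $\lozenge_a(Vf)=f$.

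The main obstacle is legitimizing the passage of $\partial_t$ and $\Delta_x$ inside the $\tau$-integral as $\tau\to t-$: the Gaussian bound $|\Delta_x\varepsilon_{n,a}(y,s)|\lesssim s^{-1-n/2}e^{-|y|^2/(4s)}$ together with $b(t,\tau)\sim(t-\tau)$ near $\tau=t$ makes the naive estimate of $\int_0^t|\Delta_x w|\,d\tau$ diverge. I would resolve this in the classical manner: extend $f(\cdot,\tau)$ by zero to $\Rn$ (harmless, since its support is a compact subset of $\Omega$), so $w(x,t;\tau)=\big(\varepsilon_{n,a}(\cdot,b(t,\tau))*f(\cdot,\tau)\big)(x)$, and use $\int_{\Rn}\Delta_\xi\varepsilon_{n,a}(x-\xi,b(t,\tau))\,d\xi=0$ to rewrite
\[
 \Delta_x w(x,t;\tau)=\int_{\Rn}\Delta_x\varepsilon_{n,a}\big(x-\xi,b(t,\tau)\big)\big(f(\xi,\tau)-f(x,\tau)\big)\,d\xi ;
\]
the right-hand side is absolutely integrable in $\tau$ on $(0,t)$ once $f(\cdot,\tau)$ has a local modulus of continuity at $x$ of Dini (in particular, Hölder) type, via $\int_{\Rn}s^{-1-n/2}|y|^\alpha e^{-|y|^2/(4s)}\,dy\lesssim s^{\alpha/2-1}$ and $\int_0^t(t-\tau)^{\alpha/2-1}\,d\tau<\infty$ for $\alpha>0$, and the same device controls $\partial_t w=a(t)\Delta_x w$. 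If one is content with \eqref{e1.1} holding in the sense of distributions, all of this is unnecessary: after the zero-extension, $Vf$ is the convolution of the fundamental solution $\varepsilon_{n,b}$ of Lemma \ref{lem2.1} with $f$, so $\lozenge_a(Vf)=f$ distributionally with no continuity hypothesis. The remaining points are routine: $a_1$ is absolutely continuous and, under assumption (a), strictly increasing, hence $b(t,\tau)>0$ for $t>\tau$, which gives the non-degeneracy of the Gaussians away from $\tau=t$ and legitimizes the dominated-convergence arguments on the slabs $0\le\tau\le t-\epsilon$ before letting $\epsilon\to0$.
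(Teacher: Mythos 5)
Your proposal is correct, and for the two easy assertions it coincides with the paper's argument: the paper also extends the $\xi$-integration to $\mathbb{R}^{n}$ using $\operatorname{supp}f(\cdot,\tau)\subset\Omega$, invokes the normalization \eqref{pr3} to get \eqref{e2.6}, and deduces \eqref{i2.8} from that estimate. The difference lies entirely in the PDE claim: the paper disposes of it with the single sentence ``a direct calculation gives that the volume potential $Vf$ satisfies equation \eqref{e1.1}'', whereas you actually carry out the Duhamel computation and, more importantly, identify the genuine analytic obstruction --- the naive bound $\int_{0}^{t}a(\tau)\,[b(t,\tau)]^{-1}\,d\tau=+\infty$ shows one cannot differentiate twice under the $\tau$-integral for merely bounded $f$. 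Your resolution (subtracting $f(x,\tau)$ via $\int_{\mathbb{R}^n}\Delta_x\varepsilon_{n,a}\,d\xi=0$ and using a Dini or H\"older modulus of continuity in $x$, with $\int_{0}^{t}a(\tau)[b(t,\tau)]^{\alpha/2-1}d\tau=\tfrac{2}{\alpha}[a_{1}(t)]^{\alpha/2}<\infty$) is the classical and correct fix, and your observation that under the stated hypothesis (bounded $f$) the equation can only be asserted distributionally is a fair criticism of the theorem as formulated: the paper's hypotheses do not suffice for $Vf\in C^{2,1}_{x,t}$, a point the paper silently passes over and which matters later since Theorem \ref{thm4.1} uses $Vf$ with continuous compactly supported density. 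In short, your proof is more complete than the paper's; the only cosmetic caveat is that near $\tau=t$ one should phrase the singularity in terms of $b(t,\tau)$ rather than $t-\tau$, since $a\in L_{1}[0,T]$ need not be bounded below, but your actual estimates already do this correctly.
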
 
\begin{proof}[Proof of Theorem \ref{thm2.2}]
	Since $\operatorname{supp}f(\cdot,t)\subset\Omega$ for all $0\leq t \leq T$, it is obvious that
	\begin{eqnarray*}
	\begin{aligned}
	&(Vf)(x,t)=\int_{0}^{t}\int_{\Omega}
	\varepsilon_{n, a}(x-\xi,b(t,\tau))f(\xi,\tau)d\xi d\tau\\
	&=\int_{0}^{t}\int_{\Rn}
	\varepsilon_{n, a}(x-\xi,b(t,\tau))f(\xi,\tau)d\xi d\tau,\quad x\in\Omega,~t\in (0,T).
	\end{aligned}
	\end{eqnarray*}
	Thus, by virtue of \eqref{pr3}, we obtain \eqref{e2.6}
	
	\begin{eqnarray*}
		\begin{aligned}
		&|(Vf)(x,t)|\leq \sup_{(\xi,\tau)\in \Rn\times [0,t]}|f(\xi,\tau)|\int_{0}^{t}\int_{\Rn}\varepsilon_{n, a}(x-\xi,b(t,\tau))d\xi d\tau\\
		&=t\sup_{(\xi,\tau)\in \Omega\times [0,t]}|f(\xi,\tau)|,\quad (x,t)\in\Omega\times(0,T).
		\end{aligned}
	\end{eqnarray*}
	 A direct calculation gives that  the volume potential $Vf$ satisfies equation \eqref{e1.1}.
	Also, we observe that  estimate \eqref{e2.6}  ensures convergence of \eqref{i2.8}.
\end{proof}
	The degenerate parabolic potential defined by
	\begin{equation}\label{Poisson}
	(P\varphi)(x,t):=\int_{\Omega} \varepsilon_{n, b}(x-\xi,t)\varphi(\xi) d\xi,\quad x\in\Omega,~0<t<T,
	\end{equation}
	is called the Poisson potential (see, e.g. [\cite{L1}, p. 153]), where $\varphi$ is a bounded function in $\mathbb{R}^{n}$ with  $\operatorname{supp}\varphi\subset\Omega$ and  $\varepsilon_{n, b}(x-\xi,t)=\varepsilon_{n}( x-\xi,a_{1}(t))$.
	
    \begin{thm} \label{thm2.3}	Let $a(t)$ satisfy the assumption (b). Let $\varphi$ be a bounded function in $\mathbb{R}^{n}$ with  $\operatorname{supp}\varphi\subset\Omega$. Then the Poisson  integral \eqref{Poisson} admits  the estimate
	\begin{equation}\label{e2.7}
	|(P\varphi)(x,t)|\leq \sup_{\xi \in \Omega}|\varphi(\xi)|,\quad x\in\Omega,~0<t<T,
	\end{equation}
	and solves the equation
	\begin{equation}\label{i2.12}
	\lozenge_{a}u=0,\quad\text{in}~\Omega\times(0,T).
	\end{equation} 
	Moreover, if $\varphi$ is a continuous bounded function in $\Rn$ with $\operatorname{supp}\varphi\subset\Omega$, then the Poisson integral $P\varphi$ belongs to the class $C^{\infty}$ and satisfies the initial condition 	
	\begin{equation}\label{i2.11}
	u(\cdot,0)=\varphi,\quad\text{in}~\Omega,
	\end{equation}
	providing its continuous extension to $\Omega\times[0,T)$.
\end{thm}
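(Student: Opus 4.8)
The plan is to follow the three parts of the statement in turn, mirroring the proof of Theorem~\ref{thm2.2}. First, for the bound \eqref{e2.7}, since $\operatorname{supp}\varphi\subset\Omega$ I would rewrite
\[
(P\varphi)(x,t)=\int_{\Rn}\varepsilon_{n,b}(x-\xi,t)\varphi(\xi)\,d\xi,
\]
and use the normalization $\int_{\Rn}\varepsilon_{n,b}(x-\xi,t)\,d\xi=1$, which holds for every $t>0$ under assumption (b) (it is \eqref{pr1} after a translation, since $\varepsilon_{n,b}(x-\xi,t)=\varepsilon_n(x-\xi,a_1(t))$ and $a_1(t)>0$). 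Pulling $\sup_{\Omega}|\varphi|$ outside the integral gives \eqref{e2.7} at once.

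Second, to verify \eqref{i2.12} I would differentiate under the integral sign. On any compact interval $[t_0,t_1]\subset(0,T)$ the continuous function $a_1$ is bounded away from $0$, so $\varepsilon_n(x-\xi,a_1(t))$ together with its $x$-derivatives and $\partial_s\varepsilon_n(x-\xi,a_1(t))$ are bounded uniformly in $\xi$ over $\operatorname{supp}\varphi$ and locally uniformly in $(x,t)$; this justifies exchanging derivatives and integral. It then remains to note that the kernel itself is $\lozenge_a$-annihilated for $t>0$: by the chain rule $\partial_t\varepsilon_{n,b}(x-\xi,t)=a(t)\,\partial_s\varepsilon_n(x-\xi,a_1(t))$ for a.e.\ $t$, while $\Delta_x\varepsilon_{n,b}(x-\xi,t)=(\Delta\varepsilon_n)(x-\xi,a_1(t))$, so
\[
\lozenge_a\varepsilon_{n,b}(x-\xi,t)=a(t)\big[\partial_s\varepsilon_n-\Delta\varepsilon_n\big](x-\xi,a_1(t))=0
\]
because $\varepsilon_n$ solves the standard heat equation for positive time. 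Hence $\lozenge_a(P\varphi)=0$ on $\Omega\times(0,T)$. The same domination estimates show that, for continuous bounded $\varphi$ with $\operatorname{supp}\varphi\subset\Omega$, $P\varphi$ is infinitely differentiable in $x$ (and jointly smooth in $(x,a_1(t))$), which is the asserted $C^\infty$ regularity.

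Third --- the step where the degeneracy really enters --- I would establish \eqref{i2.11} and the continuous extension by an approximate-identity argument at $t\to0+$. Fixing $x_0\in\Omega$ and using the normalization again, I would write
\[
(P\varphi)(x,t)-\varphi(x)=\int_{\Rn}\varepsilon_{n,b}(x-\xi,t)\big(\varphi(\xi)-\varphi(x)\big)\,d\xi,
\]
split the integral into $\{|\xi-x|<\delta\}$ and $\{|\xi-x|\ge\delta\}$, bound the near part by the modulus of continuity of $\varphi$ (uniform, as $\varphi$ is continuous with compact support), and bound the far part by $2\sup_{\Rn}|\varphi|\int_{|y|\ge\delta}\varepsilon_n(y,a_1(t))\,dy$, which tends to $0$ as $t\to0+$ since $a_1(t)\to0+$ (the same concentration computation as in the verification of \eqref{pr2}). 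Together with $|\varphi(x)-\varphi(x_0)|\to0$ this yields $(P\varphi)(x,t)\to\varphi(x_0)$ as $(x,t)\to(x_0,0)$, so setting $(P\varphi)(x,0):=\varphi(x)$ provides the continuous extension of $P\varphi$ to $\Omega\times[0,T)$.

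I expect the main obstacle to be the careful handling of the low regularity of the coefficient: because $a\in L_1[0,T]$, the primitive $a_1$ is only absolutely continuous, so the identity $\lozenge_a(P\varphi)=0$ and the $t$-dependence in the $C^\infty$ statement must be read in the almost-everywhere (equivalently, ``after composition with $a_1$'') sense, and one has to make sure the differentiation under the integral and the $t\to0+$ limit survive the vanishing of $a_1(t)$ at $t=0$. The spatial estimates and the approximate-identity limit are then routine.
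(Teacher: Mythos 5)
Your proposal is correct and follows essentially the same route as the paper: the sup bound via the normalization $\int_{\Rn}\varepsilon_{n,b}(x-\xi,t)\,d\xi=1$, differentiation under the integral for $\lozenge_a u=0$, and an approximate-identity argument at $t\to 0+$ driven by $a_1(t)\to 0+$ (the paper rescales via $\xi=x+2\sqrt{a_1(t)}\,z$ before splitting, while you split in $\xi$ directly, but this is the same estimate). Your remark that the $t$-regularity must be read modulo the composition with $a_1$, since $a\in L_1[0,T]$, is a fair point that the paper's proof passes over in silence.
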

\begin{proof}[Proof of Theorem \ref{thm2.3}]
	Since $\operatorname{supp}\varphi\subset\Omega$, it is obvious that
	\begin{eqnarray*}
		\begin{aligned}
		&(P\varphi)(x,t)=\int_{\Omega}\varphi(\xi)
		\varepsilon_{n, b}(x-\xi,t)d\xi\\
		&=\int_{\mathbb{R}^n}\varphi(\xi)
		\varepsilon_{n, b}(x-\xi,t)d\xi, \quad(x,t)\in\Omega\times(0,T).
		\end{aligned}
	\end{eqnarray*}
	For $x\in\Omega$ and $0<t<T$,   we have the estimate
	\begin{equation*}
	|(P\varphi)(x,t)|\leq \sup_{\xi \in \mathbb{R}^n}|\varphi(\xi)|\int_{\mathbb{R}^{n}}\varepsilon_{n, b}(x-\xi,t)d\xi=\sup_{\xi \in \Omega}|\varphi(\xi)|.
	\end{equation*}
	Since for all $x\in \Omega$ and $t\in (0,T)$ differentiation and integration can be interchanged in  \eqref{Poisson}, it is straightforward to check that $P\varphi$ satisfies \eqref{i2.12}.
	
	Let $\varphi$ be a continuous bounded function in $\mathbb{R}^{n}$ with  $\operatorname{supp}\varphi\subset\Omega$. Taking  into account \eqref{pr2}, we see that $P\varphi$ satisfies  initial condition \eqref{i2.11}. Now we substitute $\xi=x+2\sqrt{a_{1}(t)}z$ to obtain
	\begin{equation*}
	P(\varphi)(x,t)=\frac{1}{\pi^{\frac{n}{2}}}\int_{\mathbb{R}^n}
	\varphi(x+2\sqrt{a_{1}(t)}z)e^{-|z|^{2}}dz.
	\end{equation*}
	The assumption for $\varphi$ provides its boundedness and uniformly continuity. Let $M_{\varphi}>0$ be an upper bound for $\varphi$. Since $\varphi$ is a uniformly continuous function, for any $\varepsilon>0$, there exists $\delta>0$ such that $|\varphi(x)-\varphi(\xi)|<\frac{\varepsilon}{2}$ for all $x,\xi \in \mathbb{R}^{n}$ with $|x-\xi|<\delta$.
	Then for any $\varepsilon>0$ we can choose $r>0$ such that
	$$\frac{1}{\pi^{\frac{n}{2}}}\int_{|z|\geq r}e^{-|z|^{2}}dz\leq \frac{\varepsilon}{4M_{\varphi}}.$$
	Since $a_{1}(t)$ is a continuous function in $[0,T]$,  for any $\eta>0$ there exists $\delta_{\eta}>0$ such that $|a_{1}(t)|<\eta$ for all $t\in [0,T]$ with $t<\delta_{\eta}$. Setting $\eta=\frac{\delta^{2}}{4r^{2}}$ and using the fact that for $|z|\leq r$ and $t<\delta_{\eta}$ we have $2\sqrt{a_{1}(t)}z<2\sqrt{\eta}r=\delta$, we deduce that
	\begin{align*}
	&\Bigg|	\frac{1}{\big(4\pi a_{1}(t)\big)^{\frac{n}{2}}}\int_{\mathbb{R}^n}e^{-\frac{|x-\xi|^{2}}{4a_{1}(t)}}\varphi(\xi)d\xi-\varphi(x)\Bigg|\\
	&=\Bigg|\frac{1}{\pi^{\frac{n}{2}}}\int_{\mathbb{R}^n}\Big(
	\varphi\big(x+2\sqrt{a_{1}(t)}z\big)-\varphi(x)\Big)e^{-|z|^{2}}dz\Bigg|\\
	&<\frac{\varepsilon}{2\pi^{\frac{n}{2}}}\int_{|z|\leq r}e^{-|z|^{2}}dz+\frac{2M_{\varphi}}{\pi^{\frac{n}{2}}}\int_{|z|\geq r}e^{-|z|^{2}}dz<\varepsilon,
	\end{align*}
	for all $x \in \mathbb{R}^{n}$ and $t<\delta_{\eta}$. This implies continuity of the potential $P(\varphi)$ at $t=0$ and $P(\varphi)(\cdot,0)=\varphi$ in $\Omega$. 
\end{proof}

\section{Layer potentials}\label{sec3}

Let $a(t)$ satisfy the assumption (a) and  $\varphi \in C(\partial \Omega \times [0,T])$. Then the single layer potential for the degenerate parabolic equation \eqref{e1.1}  can be defined  by 

\begin{equation}\label{singlelayer}
(S\varphi)(x,t):=\int_{0}^{t}\int_{\partial\Omega}
\varepsilon_{n, a}(x-\xi,b(t,\tau))\varphi(\xi,\tau)a(\tau)dS_{\xi} d\tau,
\end{equation}
and the double layer potential can be defined  by
\begin{equation}\label{doublelayer}
(D\varphi)(x,t):=\int_{0}^{t}\int_{\partial\Omega}
\frac{\partial\varepsilon_{n, a}(x-\xi,b(t,\tau))}{\partial\nu(\xi) }\varphi(\xi,\tau)a(\tau)dS_{\xi} d\tau,
\end{equation}
where $\nu(\xi)$ is the outward unit normal at the boundary point $\xi \in \partial \Omega$.
If $x \in \partial\Omega$, then these integrals are improper and defined as $\lim\limits_{h\to 0}\int_{0}^{t-h}\int_{\partial\Omega}$.
\begin{thm}\label{thm3.1}
	The single layer potential with bounded measurable density $\varphi$ is continuous in $\mathbb{R}^{n}\times \mathbb{R}_{+}$. In particular, it is continuous across the boundary $\partial\Omega$.
\end{thm}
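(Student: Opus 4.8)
The plan is to reduce the continuity of $S\varphi$ across $\partial\Omega$ to a uniform estimate on the surface integral combined with a standard uniform-convergence argument, mimicking the classical heat single-layer potential theory but carrying the degeneracy through the substitution $t,\tau \mapsto a_1(t), a_1(\tau)$. First I would fix a point $(x_0,t_0)\in\partial\Omega\times\mathbb{R}_+$ and show that the map $(x,t)\mapsto (S\varphi)(x,t)$ is continuous there; continuity at interior points of $\mathbb{R}^n\setminus\partial\Omega$ is easier since the kernel is then smooth in $x$ away from the singularity, so the crux is the boundary behaviour. The key observation is that, since $a(t)$ satisfies assumption (a), we have $b(t,\tau)=a_1(t)-a_1(\tau)>0$ for $t>\tau$, and the kernel $\varepsilon_{n,a}(x-\xi,b(t,\tau))$ together with the weight $a(\tau)$ can be rewritten using $db(t,\tau) = -a(\tau)\,d\tau$ in the $\tau$-variable; this turns the time integral into an integral against the classical Gaussian kernel in the variable $s = b(t,\tau)$ ranging over $(0,a_1(t))$. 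This is exactly the device that makes the degenerate theory a ``reparametrised'' copy of the classical one.

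The main step is then the estimate
\begin{equation*}
\int_{\partial\Omega}\bigl|\varepsilon_{n,a}(x-\xi,b(t,\tau))\bigr|\,a(\tau)\,dS_\xi \;\le\; \frac{C}{\bigl(b(t,\tau)\bigr)^{1/2}}\,a(\tau),
\end{equation*}
valid uniformly for $x$ in a neighbourhood of $\partial\Omega$, which follows because $\partial\Omega\in C^{1+\lambda}$ is an $(n-1)$-dimensional Lyapunov surface: locally flattening the boundary and integrating the Gaussian $\bigl(4\pi b\bigr)^{-n/2}e^{-|x-\xi|^2/4b}$ over an $(n-1)$-plane produces a factor $\bigl(4\pi b\bigr)^{-1/2}$, and the Lyapunov regularity controls the error from curvature. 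Changing variables $s=b(t,\tau)$, $ds=-a(\tau)\,d\tau$, the $\tau$-integral of the right-hand side becomes $\int_0^{a_1(t)} C s^{-1/2}\,ds = 2C\sqrt{a_1(t)}$, which is finite and continuous in $t$; this simultaneously gives absolute convergence of the improper integral defining $S\varphi$ on $\partial\Omega$ and an equicontinuity-type bound on the ``tail'' $\int_{t-h}^{t}$.

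Having the uniform bound, I would split $(S\varphi)(x,t) - (S\varphi)(x_0,t_0)$ into a near-diagonal piece over $\tau\in(t-h,t)$ (respectively $(t_0-h,t_0)$) and a far piece over $\tau\in(0,t-h)$. The near piece is controlled uniformly in $x$ by the bound above, which is $O(\sqrt{a_1(t)-a_1(t-h)})\to 0$ as $h\to 0$ uniformly for $(x,t)$ near $(x_0,t_0)$, using continuity of $a_1$. On the far piece the kernel $\varepsilon_{n,a}(x-\xi,b(t,\tau))$ is a bounded continuous function of $(x,t)$ jointly with $\xi$ (since $b(t,\tau)$ stays bounded below by a positive constant), so dominated convergence gives continuity of the far piece in $(x,t)$, including as $x$ crosses $\partial\Omega$ — there is no jump because the single-layer kernel itself has no jump, only its normal derivative does. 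Combining the two pieces yields joint continuity on $\mathbb{R}^n\times\mathbb{R}_+$.

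I expect the main obstacle to be the surface estimate near the diagonal: establishing the $\bigl(b(t,\tau)\bigr)^{-1/2}a(\tau)$ bound with a constant uniform in $x$ requires genuinely using the Lyapunov condition $\partial\Omega\in C^{1+\lambda}$ to show $|x-\xi|\gtrsim \operatorname{dist}(x,\partial\Omega) + |x'-\xi'|$ in local coordinates (so that the Gaussian does not see the degeneracy of the surface measure), and one must check the bound is stable as $x$ passes through the surface. The reparametrisation $s=b(t,\tau)$ is the clean way to handle the time-degeneracy, but some care is needed because $a$ is only $L_1$ and may vanish at isolated points, so $b(t,\cdot)$ need not be strictly monotone everywhere; one should argue on the level of the Lebesgue--Stieltjes measure $-db(t,\cdot)$ rather than assuming a smooth change of variables.
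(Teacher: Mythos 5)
Your argument is correct, but it follows a genuinely different (and more self-contained) route than the paper. The paper's proof is very short: using the elementary inequality $s^{\beta}e^{-s}\le\beta^{\beta}e^{-\beta}$ with $s=|x-\xi|^{2}/(4b(t,\tau))$ and $\beta=\frac{n}{2}-\gamma$, it dominates the integrand by
\[
\big|\varepsilon_{n,a}(x-\xi,b(t,\tau))a(\tau)\big|\le \frac{C|a(\tau)|}{|x-\xi|^{n-2\gamma}\big(b(t,\tau)\big)^{\gamma}},
\]
chooses $\tfrac{1}{2}<\gamma<1$ so that the kernel is weakly singular both on the $(n-1)$-dimensional surface (exponent $n-2\gamma<n-1$) and in time (after your same substitution $s=b(t,\tau)$ the time factor is $s^{-\gamma}$ with $\gamma<1$), and then cites Lemma~1 on p.~7 of \cite{F4} to conclude that a layer potential with such a weakly singular kernel and bounded density is continuous. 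Your proposal essentially reproves the content of that citation in this setting: the exact Gaussian integration over the flattened boundary giving the sharp bound $C\,b(t,\tau)^{-1/2}a(\tau)$, the reparametrisation $s=b(t,\tau)$ yielding $2C\sqrt{a_{1}(t)}$, and the near/far splitting with uniform smallness of the tail are precisely the ingredients hidden inside the cited lemma. What your version buys is an explicitly uniform-in-$x$ surface estimate, which is the point that actually delivers continuity \emph{across} $\partial\Omega$; what the paper's version buys is brevity, at the price of splitting the singularity between space and time less sharply and of delegating the uniformity to the reference. One small remark: your caution about $b(t,\cdot)$ failing to be strictly monotone is unnecessary under assumption (a), since the zeros of $a$ are isolated and hence of measure zero, so $a_{1}$ is strictly increasing and the change of variables for absolutely continuous monotone functions applies directly.
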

\begin{proof}[Proof of Theorem \ref{thm3.1}]
	If we prove that $\varepsilon_{n, a}(x-\xi,b(t,\tau))a(\tau)$ is locally integrable, the proof follows from \cite[p. 7, Lemma 1]{F4}. So, let us show $\varepsilon_{n, a}(x-\xi,b(t,\tau))a(\tau)$ is locally integrable.
	
	We have
	\begin{equation}\label{inq6}
	s^{\beta}e^{-s}\leq \beta^{\beta}e^{-\beta},
	\end{equation}
	for all $0<s,\beta<\infty$. Using \eqref{inq6} for the case $s=\frac{|x-\xi|^{2}}{4b(t,\tau)}$, $\beta=\frac{n}{2}-\gamma$, we have
	$$\big|\varepsilon_{n, a}(x-\xi,b(t,\tau))a(\tau)\big|\leq \frac{C|a(\tau)|}{|x-\xi|^{n-2\gamma}\big(b(t,\tau)\big)^{\gamma}},$$
	where $0<\gamma<\frac{n}{2}$. Hence, choosing $\frac{1}{2}<\gamma<1$, we  observe that $\varepsilon_{n, a}(x-\xi,b(t,\tau))a(\tau)$ is locally integrable. This completes the proof.
\end{proof}

A direct calculation gives that the double layer potential and single layer potential are infinitely many times differentiable solutions of \eqref{i2.12} in $\Omega\times(0,T)$.
Both layer potentials can be continuously extended to $\Omega\times [0,T) $  by setting $(D\varphi)(x,0)=0$ and $(S\varphi)(x,0)=0$ for all $x\in\Omega$.

We pay special attention to the boundary behaviour of the gradient of the single layer potential $(S\varphi)(x,t)$ when $\Omega \ni x\to x_{0}\in \partial\Omega$ along nontangential directions. For any  $x_{0}\in \partial\Omega$, we denote by $K=K(x_{0})$ a finite closed cone in $\mathbb{R}^{n}$ with vertex $x_{0}$ such that 
$K(x_{0})\subset\Omega\cup\{x_{0}\}$.
As in \cite{F4} and \cite{K5}, we prove the following theorem.
	\begin{thm}\label{thm3.2}
	Let $\partial\Omega\in C^{1+\lambda}$, $0<\lambda<1$. Let $\varphi$ be a continuous function on $\partial\Omega\times[0,T]$. Then, for any $x_{0}\in \partial\Omega$ and $t\in (0,T]$, the single layer potential \eqref{singlelayer} satisfies the jump relation
	\begin{equation}\label{e3.1}
	\begin{aligned}
	&\lim\limits_{\stackrel{x \to x_{0}}{x \in K}}\big\langle \nabla_{x}(S\varphi)(x,t),\nu(x_{0})\big\rangle=\frac{1}{2}\varphi(x_{0},t)\\
	&+\int_{0}^{t}\int_{\partial\Omega}
	\frac{\partial\varepsilon_{n, a}(x_{0}-\xi,b(t,\tau))}{\partial\nu(x_{0}) }\varphi(\xi,\tau)a(\tau)dS_{\xi} d\tau,
	\end{aligned}
	\end{equation}
	where the limit is taken along the outward normal $\nu(x_{0})$ and $\nabla_{x}$ is the usual gradient.
\end{thm}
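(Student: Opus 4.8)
The plan is to reduce \eqref{e3.1} to the classical jump relation for the single layer heat potential by means of a change of the time variable. Since $a$ satisfies assumption (a), the function $a_1(t)=\int_0^t a(z)\,dz$ is absolutely continuous and strictly increasing on $[0,T]$, hence a homeomorphism of $[0,T]$ onto $[0,a_1(T)]$; write $a_1^{-1}$ for its inverse. Putting $s=a_1(t)$ and $\sigma=a_1(\tau)$ in \eqref{singlelayer}, using $d\sigma=a(\tau)\,d\tau$, $b(t,\tau)=a_1(t)-a_1(\tau)=s-\sigma$ and $\varepsilon_{n, a}(x-\xi,b(t,\tau))=\varepsilon_{n}(x-\xi,s-\sigma)$, we get
\begin{equation*}
(S\varphi)(x,t)=\int_{0}^{s}\int_{\partial\Omega}\varepsilon_{n}(x-\xi,s-\sigma)\,\widetilde\varphi(\xi,\sigma)\,dS_{\xi}\,d\sigma=:(\mathcal{S}\widetilde\varphi)(x,s),
\end{equation*}
where $\widetilde\varphi(\xi,\sigma):=\varphi\big(\xi,a_1^{-1}(\sigma)\big)$. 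As $\varphi\in C(\partial\Omega\times[0,T])$ and $a_1^{-1}$ is continuous, we have $\widetilde\varphi\in C(\partial\Omega\times[0,a_1(T)])$, so $\mathcal{S}\widetilde\varphi$ is exactly the classical single layer potential for the heat operator $\partial_{s}-\Delta_{x}$ with a continuous density. The change of variables does not affect $x$, so the nontangential cone $K=K(x_0)$ is unchanged and $\nabla_{x}(S\varphi)(x,t)=(\nabla_{x}\mathcal{S}\widetilde\varphi)(x,a_1(t))$.

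Next I would invoke the classical jump relation for the normal derivative of $\mathcal{S}\widetilde\varphi$ across a $C^{1+\lambda}$ boundary, as established in \cite{F4} (see also \cite{K5}): for $x_{0}\in\partial\Omega$, $s\in(0,a_1(T)]$ and $x\to x_{0}$ inside $K$ along $\nu(x_{0})$,
\begin{equation*}
\lim\limits_{\stackrel{x \to x_{0}}{x \in K}}\big\langle\nabla_{x}(\mathcal{S}\widetilde\varphi)(x,s),\nu(x_{0})\big\rangle=\tfrac12\widetilde\varphi(x_{0},s)+\int_{0}^{s}\int_{\partial\Omega}\big\langle(\nabla_{x}\varepsilon_{n})(x_{0}-\xi,s-\sigma),\nu(x_{0})\big\rangle\widetilde\varphi(\xi,\sigma)\,dS_{\xi}\,d\sigma,
\end{equation*}
the boundary integral being an improper integral at $\sigma=s$. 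Substituting back $s=a_1(t)$, $\sigma=a_1(\tau)$, so that $\widetilde\varphi(x_{0},s)=\varphi(x_{0},t)$, $d\sigma=a(\tau)\,d\tau$, and $\big\langle(\nabla_{x}\varepsilon_{n})(x_{0}-\xi,s-\sigma),\nu(x_{0})\big\rangle=\dfrac{\partial\varepsilon_{n, a}(x_{0}-\xi,b(t,\tau))}{\partial\nu(x_{0})}$, turns this identity into \eqref{e3.1}.

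The main work lies in verifying that the hypotheses of the classical theorem survive the transformation, which is routine: the joint continuity of $\widetilde\varphi$ was noted above, and the Lyapunov regularity $\partial\Omega\in C^{1+\lambda}$ is exactly the geometric input used in \cite{F4} and \cite{K5} to make the weakly singular boundary integral convergent (through a bound of the form $|\langle\xi-x_{0},\nu(x_{0})\rangle|\le C|\xi-x_{0}|^{1+\lambda}$) and to identify the $\tfrac12$-jump. Moreover, the singularity of the kernel at $\tau=t$ becomes the ordinary heat-potential singularity at $\sigma=s$ after the substitution, so the improper-integral prescription $\lim_{h\to0}\int_{0}^{t-h}$ in \eqref{singlelayer} matches the classical one and creates no extra difficulty. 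One could instead argue directly in the degenerate setting by writing $\varphi(\xi,\tau)=\big[\varphi(\xi,\tau)-\varphi(x_{0},t)\big]+\varphi(x_{0},t)$, showing the first term contributes a function continuous up to $\partial\Omega$ (so its limit equals its value at $x_{0}$ and cancels the corresponding part of the direct-value integral in \eqref{e3.1}) and that the constant-density term produces the jump $\tfrac12\varphi(x_{0},t)$ by a Gauss--Green computation; but the reduction above is the shortest route and is what the references \cite{F4}, \cite{K5} indicate.
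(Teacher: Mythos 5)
Your reduction is correct, but it is a genuinely different route from the paper's. The paper proves the jump relation directly in the degenerate variables, adapting Friedman's argument from scratch: it splits $\big\langle\nabla_{x}(S\varphi)(x,t),\nu(x_{0})\big\rangle$ into a near part $I_{r}$ and a far part $J_{r}$, compares $I_{r}$ with its flattened version $I_{r}^{'}$ on the tangent plane, and establishes the limits \eqref{e3.2}--\eqref{e3.4} through the kernel estimates \eqref{inq1}--\eqref{inq9}, extracting the $\tfrac12\varphi(x_{0},t)$ jump from a solid-angle computation after the radial substitution $\rho=\frac{|x-\xi^{'}|^{2}}{4b(t,\tau)}$. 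You instead observe that under assumption (a) the map $t\mapsto a_{1}(t)$ is a continuous strictly increasing bijection of $[0,T]$ onto $[0,a_{1}(T)]$, so the substitution $s=a_{1}(t)$, $\sigma=a_{1}(\tau)$, $d\sigma=a(\tau)\,d\tau$ turns \eqref{singlelayer} into the classical single layer heat potential with the continuous density $\widetilde\varphi(\xi,\sigma)=\varphi(\xi,a_{1}^{-1}(\sigma))$, and then you quote the classical jump relation and transform back. This is legitimate: the substitution is exactly the device the paper itself uses later for the double layer operator in \eqref{doublelayerH}, the change of variables is justified because $a_{1}$ is absolutely continuous and monotone, the improper-integral prescriptions correspond under the time change, and the classical theorem in \cite{F4} needs only a continuous density and a $C^{1+\lambda}$ boundary, both of which survive the transformation. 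What your argument buys is brevity and a clean separation of the degeneracy (absorbed entirely into the time reparametrization) from the boundary analysis; what it costs is that the classical jump relation is used as a black box, whereas the paper's direct computation keeps the proof self-contained in the degenerate kernel $\varepsilon_{n,a}(x-\xi,b(t,\tau))$ and exhibits explicitly which estimates on $a(\tau)/[b(t,\tau)]^{\gamma}$ are needed --- estimates that are then reused verbatim in Theorem \ref{thm3.3} and in the compactness and iteration arguments of Section \ref{sec3}. Your closing remark that one could alternatively argue directly by splitting off $\varphi(x_{0},t)$ and treating the constant-density term by a Gauss--Green computation is, in spirit, what the paper actually does via $I_{r}^{'}$.
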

\begin{proof}[Proof of Theorem \ref{thm3.2}] For convenience of a reader,  let us  rewrite the formula of the single layer potential
	\begin{equation*}
	(S\varphi)(x,t)=\int_{0}^{t}\int_{\partial\Omega}
	\varepsilon_{n, a}(x-\xi,b(t,\tau))\varphi(\xi,\tau)a(\tau)dS_{\xi} d\tau.
	\end{equation*}
	
	Let $T(x_{0})$  denote the tangent hyperplane to the boundary $\partial\Omega$ at the point $x_{0}$ and $\partial\Omega_{r}:=B(x_{0},r)\cap \partial\Omega $, where $B(x_{0},r)$ is the open ball of radius $r>0$ centred at the point $x_{0}$ in $\mathbb{R}^{n}$. Since $\partial\Omega \in C^{1+\lambda},~0<\lambda<1,$ if $r>0$ is small enough, the orthogonal projection $\Phi:\partial\Omega_{r}\to T(x_{0})$ is one-to-one map. We denote its image by $\partial\Omega_{r}^{'}:=\Phi(\partial\Omega_{r})$.
	For convenience we split the inner product in \eqref{e3.1} into two parts
	\begin{equation*}
	\big\langle\nabla_{x}(S\varphi)(x,t),\nu(x_{0})\big\rangle= I_{r}(x,t)+J_{r}(x,t),
	\end{equation*}
	with
	\begin{equation*}
	\big\langle\nabla_{x}\varepsilon_{n, a}(x-\xi,b(t,\tau)),\nu(x_{0})\big\rangle=-\frac{\langle x-\xi, \nu(x_{0})\rangle}{2^{n+1}\big[b(t,\tau)\big]^{\frac{n}{2}+1}\pi^{\frac{n}{2}}}e^{-\frac{|x-\xi|^2}{4b(t,\tau)}},
	\end{equation*}	
	and $I_{r}(x,t)$ is defined by
	\begin{equation}\label{idelta}
	I_{r}(x,t):=\int_{0}^{t}\int_{\partial\Omega_{r}}\big\langle\nabla_{x}\varepsilon_{n, a}(x-\xi,b(t,\tau)),\nu(x_{0})\big\rangle
	\varphi(\xi,\tau)a(\tau)dS_{\xi} d\tau,
	\end{equation}
	and  the complementary part $J_{r}(x,t)$ is defined by
	\begin{equation}\label{jdelta}
	J_{r}(x,t):=\int_{0}^{t}\int_{\partial\Omega\setminus\partial\Omega_{r}}\big\langle\nabla_{x}\varepsilon_{n, a}(x-\xi,b(t,\tau)),\nu(x_{0})\big\rangle
	\varphi(\xi,\tau)a(\tau)dS_{\xi} d\tau.
	\end{equation}
	Also, we denote
	\begin{equation}\label{i'delta}
	I_{r}^{'}(x,t):=\int_{0}^{t}\int_{\partial\Omega_{r}^{'}}\big\langle\nabla_{x}\varepsilon_{n, a}(x-\xi^{'},b(t,\tau)),\nu(x_{0})\big\rangle
	\varphi(x_{0},\tau)a(\tau)dS_{\xi^{'}}d\tau,
	\end{equation}
	where $dS_{\xi^{'}}$ is the surface element (at $\xi^{'}$) on $T(x_{0})$.
	To prove \eqref{e3.1}, we show that
	\begin{equation}\label{e3.2}
	\lim\limits_{x\to x_{0}}I_{r}^{'}(x,t)=\frac{1}{2}\varphi(x_{0},t),
	\end{equation}
	\begin{equation}\label{e3.3}
	\lim\limits_{x\to x_{0}}J_{r}(x,t)=J_{r}(x_{0},t),
	\end{equation}
	and
	\begin{equation}\label{e3.4}
	\lim\limits_{x\to x_{0}}\big(I_{r}(x,t)-I_{r}^{'}(x,t)\big)=I_{r}(x_{0},t).
	\end{equation}
	
	{\em Proof of \eqref{e3.2}}. Let us introduce a new variable $\tau \leftrightarrow\rho=\frac{|x-\xi^{'}|^{2}}{4b(t,\tau)}$ in \eqref{i'delta}. The substitution gives an implicit function $\tau=\tau(\rho)$ with $\frac{|x-\xi^{'}|^{2}}{4a_{1}(t)}\leq\rho<\infty$.
	Then integrating with respect to $\tau$, we obtain
	\begin{equation*}
	I_{r}^{'}(x,t)=\int_{\partial\Omega_{r}^{'}}\frac{\big\langle x-\xi^{'}, \nu(x_{0})\big\rangle}{|x-\xi^{'}|^{n}}\psi(x,\xi^{'},t)dS_{\xi^{'}},
	\end{equation*}
	where
	\begin{equation*}
	\psi(x,\xi^{'},t):=-\frac{1}{2}\pi^{-\frac{n}{2}}\int_{\frac{|x-\xi^{'}|^{2}}{4a_{1}(t)}}^{\infty}\rho^{\frac{n}{2}-1}e^{-\rho}\varphi\big(x_{0},\tau(\rho)\big)d\rho.
	\end{equation*}
	So $\psi(x,\xi^{'},t)$ is a continuous function of $(x,\xi^{'})$ for all $x=x_{0}-l\nu(x_{0})$ with $l>0$ sufficienty small and for all $\xi^{'}\in \partial\Omega_{r}^{'}$. In particular, we have
	\begin{equation*}
	\lim\limits_{\stackrel{x\to x_{0}}{\xi^{'}\to x_{0}}}\psi(x,\xi^{'},t)=-\frac{1}{2}\pi^{-\frac{n}{2}}\varphi(x_{0},t)\int_{0}^{\infty}\rho^{\frac{n}{2}-1}e^{-\rho}d\rho.
	\end{equation*}
	Since the integral is a value of the Gamma function and is equal to $\Gamma\big(\frac{n}{2}\big)=\frac{2\pi^\frac{n}{2}}{\omega_{n}}$, where $\omega_{n}$ is the area of the unit hypersphere in $\mathbb{R}^{n}$, we obtain
	$$\psi(x_{0},x_{0},t)=-\frac{\varphi(x_{0},t)}{\omega_{n}}.$$
	
	We divide $\partial\Omega_{r}^{'}=\partial\Omega_{1r}^{'}\cup R_{r}$ into two parts such that the boundary $\partial\Omega_{1r}^{'}$  contains $x_{0}$ in its interior exactly as in \cite[Theorem 1, Section 5.2]{F4}. On $\partial\Omega_{1r}^{'}$ we change the variables $\xi^{''}=\frac{\xi^{'}-x}{|\xi^{'}-x|}$ and  denote the domain of variation of $\xi^{''}$ by $\partial\Omega_{1r}^{''}$ and the corresponding area element by $dS_{\xi^{''}}$. Since $\langle x-\xi^{'}, \nu(x_{0})\rangle=-|x-\xi^{'}|\cos(\xi^{'}-x,\nu(x_{0}))$ and $\cos(\xi^{'}-x,\nu(x_{0}))dS_{\xi^{'}}=|x-\xi^{'}|^{n-1}dS_{\xi^{''}}$, we have
	\begin{eqnarray}\label{e3.5}
	\begin{aligned}
	&I_{r}^{'}(x,t)=\frac{\varphi(x_{0},t)}{\omega_{n}}\int_{\partial\Omega_{1r}^{''}}dS_{\xi^{''}}\\
	&-\int_{\partial\Omega_{1r}^{''}}\big(\psi(x,\xi^{'},t)-\psi(x_{0},x_{0},t)\big)dS_{\xi^{''}}+R_{r}(x,t),
	\end{aligned}
	\end{eqnarray}
	where the rest part of $I_{r}^{'}(x,t)$ denoted by $R_{r}(x,t)$, that is, the $\xi^{'}$ integration in $R_{r}(x,t)$ is taken over the set $R_{r}$.
	Since $\psi(x,\xi^{'},t)$ is a continuous function of $(x,\xi^{'})$, the second integral on the right-hand side of \eqref{e3.5} can be arbitrarily small. In $R_{r}(x,t)$, it should be noted that $\langle x-\xi^{'}, \nu(x_{0})\rangle\to 0$ as $x \to x_{0}$ and that if $\xi^{'}\in R_{r} $, then $|x-\xi^{'}|$ is bounded away from zero, which implies that the term $R_{r}(x,t)$ tends to zero. For the first term on the right-hand side of \eqref{e3.5}, we see that the boundary $\partial\Omega_{1r}^{''}$ tends to a unit hemisphere, thus, the first term in \eqref{e3.5} tends to $\frac{\varphi(x_{0},t)}{2}$.
	That is, we have proved \eqref{e3.2}.
	
	{\em Proof of \eqref{e3.3}}. For the variable $\xi$ in \eqref{jdelta} the inequality $|x-\xi|\geq \dfrac{r}{2}>0$ holds for $|x-x_{0}|<\dfrac{r}{2}$. Hence, the integral is a continuous function, which implies \eqref{e3.3}.
	
	{\em  Proof of \eqref{e3.4}}. To prove \eqref{e3.4}, we take $r_{1}>0$ such that $r_{1}<r$ and write
	\begin{equation}\label{e3.6}
	\begin{aligned}
	&I_{r}(x,t)=I_{r_{1}}(x,t)+\overline{I}_{r_{1}}(x,t),\\
	&I_{r}(x_{0},t)=I_{r_{1}}(x_{0},t)+\overline{I}_{r_{1}}(x_{0},t),\\
	&I_{r}^{'}(x,t)=I_{r_{1}}^{'}(x,t)+\overline{I}_{r_{1}}^{'}(x,t),
	\end{aligned}
	\end{equation}
	where $\overline{I}_{r_{1}}(x,t)$ $\Big(\overline{I}_{r_{1}}^{'}(x,t)\Big)$ is the complementary part to $I_{r_{1}}(x,t)$ $\Big({I}_{r_{1}}^{'}(x,t)\Big)$, that is, the $\xi$ $(\xi^{'})$-integration is taken over $\partial\Omega_{r}\setminus\partial\Omega_{r_{1}}$ $\Big(\partial\Omega_{r}^{'}\setminus\partial\Omega_{r_{1}}^{'}\Big) $. Note that $I_{r}^{'}(x_{0},t)=0$, since $\nu(x_{0})\perp (x_{0}-\xi^{'})$.
	
	Equality \eqref{e3.4} will be proved by showing that  for any $\varepsilon>0$ there exists $r_{1}>0$ such that
	\begin{equation}\label{e3.7}
	|I_{r_{1}}(x,t)-I_{r_{1}}^{'}(x,t)|<\varepsilon,
	\end{equation}
	\begin{equation}\label{e3.8}
	\begin{aligned}
	&|\overline{I}_{r_{1}}(x,t)-\overline{I}_{r_{1}}(x_{0},t)|<\varepsilon,\\
	&\qquad\enspace|\overline{I}^{'}_{r_{1}}(x,t)|<\varepsilon,
	\end{aligned}
	\end{equation}
	and
	\begin{equation}\label{e3.9}
	|I_{r_{1}}(x_{0},t)|<\varepsilon.
	\end{equation}
		{\em  Proof of \eqref{e3.7}}. Note that 
	\begin{equation}\label{inq1}
	|\xi-\xi^{'}|\leq C|x_{0}-\xi|^{1+\lambda},
	\end{equation}
	\begin{equation}\label{inq2}
	0<C_{1}\leq \frac{|x-\xi|}{|x-\xi^{'}|}\leq C_{2},
	\end{equation}
	where $C,~C_{1},$ and $C_{2}$ are constants. For the proofs of \eqref{inq1} and \eqref{inq2} we refer \cite[p. 135]{F4}. Now by using \eqref{inq1} and \eqref{inq2}, we obtain
	\begin{equation}\label{inq3}
	\begin{aligned}
	&\big|\big\langle x-\xi, \nu(x_{0})\big\rangle-\big\langle x-\xi^{'}, \nu(x_{0})\big\rangle\big|= |\xi-\xi^{'}|\leq C|x_{0}-\xi|^{1+\lambda}\\
	& \leq C |x-\xi|^{1+\lambda}.
	\end{aligned}
	\end{equation}
	By the mean value theorem and \eqref{inq2}, we get
	\begin{equation}\label{inq4}
	\begin{aligned}
	&\bigg|e^{-\frac{|x-\xi|^2}{4b(t,\tau)}}-e^{-\frac{|x-\xi^{'}|^2}{4b(t,\tau)}}\bigg|\leq e^{-\frac{K|x-\xi|^2}{4b(t,\tau)}}\frac{\big||x-\xi|^2-|x-\xi^{'}|^2\big|}{4b(t,\tau)}\\
	&\leq C e^{-\frac{K|x-\xi|^2}{4b(t,\tau)}}\frac{|x-\xi|^2}{b(t,\tau)},
	\end{aligned}
	\end{equation}
	where $K$ and $C$ are positive constants. 
	Combining \eqref{inq3}, \eqref{inq4} we obtain
	\begin{equation}\label{inq7}
	\begin{aligned}
	&\bigg|\frac{\big\langle x-\xi,\nu(x_{0})\big\rangle a(\tau) }{\big[b(t,\tau)\big]^{1+\frac{n}{2}}}e^{-\frac{|x-\xi|^2}{4b(t,\tau)}}-\frac{\big\langle x-\xi^{'},\nu(x_{0})\big\rangle a(\tau) }{\big[b(t,\tau)\big]^{1+\frac{n}{2}}}e^{-\frac{|x-\xi^{'}|^2}{4b(t,\tau)}}\bigg|\\
	&\leq \frac{a(\tau) }{\big[b(t,\tau)\big]^{1+\frac{n}{2}}}e^{-\frac{|x-\xi|^2}{4b(t,\tau)}}\big|\langle x-\xi, \nu(x_{0})\rangle-\langle x-\xi^{'}, \nu(x_{0})\rangle\big|\\
	& +\frac{|x-\xi^{'}|\big|\cos(\nu(x_{0}),\xi^{'}-x)\big|a(\tau) }{\big[b(t,\tau)\big]^{1+\frac{n}{2}}}\bigg|e^{-\frac{|x-\xi|^2}{4b(t,\tau)}}-e^{-\frac{|x-\xi^{'}|^2}{4b(t,\tau)}}\bigg|\\
	&\leq C_{1}|x-\xi|^{1+\lambda}\frac{a(\tau) }{\big[b(t,\tau)\big]^{1+\frac{n}{2}}}e^{-\frac{|x-\xi|^2}{4b(t,\tau)}}\\
	&+C_{2}\frac{|x-\xi|^2}{b(t,\tau)}\frac{\big|x-\xi|a(\tau) }{\big[b(t,\tau)\big]^{1+\frac{n}{2}}}e^{-\frac{K|x-\xi|^2}{4b(t,\tau)}}.
	\end{aligned}
	\end{equation}
	Using \eqref{inq6} for the case $s=\frac{|x-\xi|^{2}}{4b(t,\tau)}$ $\beta=1+\frac{n}{2}-\gamma_{1}$  to the first term of the last estimate of \eqref{inq7} and using \eqref{inq6} for the case $s=\frac{K|x-\xi|^{2}}{4b(t,\tau)}$, $\beta=2+\frac{n}{2}-\gamma_{2}$ to the second term of the last estimate of \eqref{inq7}, we see that the last estimate of \eqref{inq7} is bounded by
	\begin{equation}\label{inq8}
	\frac{\widetilde{C}_{1}a(\tau)}{\big[b(t,\tau)\big]^{\gamma_{1}}|x-\xi|^{n+1-2\gamma_{1}-\lambda}}+\frac{\widetilde{C}_{2}a(\tau)}{\big[b(t,\tau)\big]^{\gamma_{2}}|x-\xi|^{n+1-2\gamma_{2}}},
	\end{equation}
	for $0<\gamma_{1}<1+\frac{n}{2}$ and $0<\gamma_{2}<2+\frac{n}{2}$, where $\widetilde{C}_{1}$ and $\widetilde{C}_{2}$ are positive constants. For $1-\frac{\lambda}{2}<\gamma<1$ we can choose $\gamma_{1}$ and $\gamma_{2}$ such that $\gamma_{1}=\gamma$ and $\gamma_{2}=\gamma+\frac{\lambda}{2}$, thus term \eqref{inq8} is bounded by
	\begin{equation}\label{inq9}
	\frac{\widetilde{C}a(\tau)}{\big[b(t,\tau)\big]^{\gamma}|x-\xi|^{n+1-2\gamma-\lambda}},
	\end{equation}
	where $\widetilde{C}=\max{\{\widetilde{C}_{1},\widetilde{C}_{2}\}}$. Hence, we have 
	\begin{equation}\label{e3.10}
	\begin{aligned}
	&|I_{r_{1}}(x,t)-I_{r_{1}}^{'}(x,t)|\leq \widetilde{C}\int_{0}^{t}\int_{\partial\Omega_{r_{1}}}\frac{a(\tau)}{\big[b(t,\tau)\big]^{\gamma}|x-\xi|^{n+1-2\gamma-\lambda}}dS_{\xi}d\tau \\
	&+\sup \bigg|\frac{\varphi(\xi,\tau)}{\cos(\nu(x_{0}), \nu(\xi))}-\varphi(x_{0},\tau)\bigg|\\
	&\times\Bigg|\int_{0}^{t}\int_{\partial\Omega_{r_{1}}^{'}}\frac{\partial\varepsilon_{n, a}(x-\xi^{'},b(t,\tau))}{\partial\nu(x_{0})}
	dS_{\xi^{'}}d\tau\Bigg|,
	\end{aligned}
	\end{equation}
	for some $1-\frac{\lambda}{2}<\gamma<1$ and $0<\lambda<1$.
	
	The integrand of the first term on the right-hand side of \eqref{e3.10} is integrable, thus we can choose $r_{1}$ small enough to  make the corresponding integral arbitrarily small. Since the second integral in \eqref{e3.10} coincides with $I_{r}^{'}$ when $r=r_{1}$ and $\varphi(x_{0},\tau)\equiv 1$, it is bounded independently of  $r_{1}$. Since $\varphi$ is a continuous function and $\cos\big( \nu(x_{0}), \nu(\xi)\big) \to 1$, the expression $\sup|\cdot|\to 0$ as $r_{1} \to 0$. This completes the proof of \eqref{e3.7}.
	
	{\em Proof of \eqref{e3.8}}. Since  $|x-\xi|$, $|x_{0}-\xi|$ and $|x-\xi^{'}|$ in $\overline{I}_{r_{1}}(x,t)$, $\overline{I}_{r_{1}}(x_{0},t)$ and $\overline{I}^{'}_{r_{1}}(x,t)$ are bounded away from zero, correspondingly, and $\cos(\nu(x_{0}),\xi^{'}-x)\to 0$ as $x\to x_{0}$,  for any fixed $r_{1}$, we have \eqref{e3.8}, if $x$ is close enough to $x_{0}$.
	
	{\em Proof of \eqref{e3.9}}. Estimates in \eqref{inq7}-\eqref{inq8} imply
	\begin{equation*}
	|I_{r_{1}}(x_{0},t)|\leq \int_{0}^{t}\int_{\partial\Omega_{r_{1}}}\frac{a(\tau)dS_{\xi} d\tau}{\big[b(t,\tau)\big]^{\gamma}|x_{0}-\xi|^{n+1-2\gamma-\lambda}},
	\end{equation*}
	for some $1-\frac{\lambda}{2}<\gamma<1$. So, we have \eqref{e3.9}, if $r_{1}$ is sufficiently small.
	
	As we have proved \eqref{e3.4}, combining together all the proofs, we arrive at
	\begin{equation*}
	\begin{aligned}
	&\lim\limits_{\stackrel{x\to x_{0}}{x\in K}}\big\langle \nabla_{x}(S\varphi)(x,t),\nu(x_{0})\big\rangle=\frac{1}{2}\varphi(x_{0},t)\\&+\int_{0}^{t}\int_{\partial\Omega}
	\frac{\partial\varepsilon_{n, a}(x_{0}-\xi,b(t,\tau))}{\partial\nu(x_{0}) }\varphi(\xi,\tau)a(\tau)dS_{\xi} d\tau.
	\end{aligned}
	\end{equation*}
\end{proof} 
Note that for any $x_{0}\in \partial\Omega$ and $t\in (0,T]$, the single layer potential satisfies the jump relation
\begin{equation}\label{jumpsingle}
\begin{aligned}
&\lim\limits_{\stackrel{x \to x_{0}}{x \in K^{'}}}\big\langle \nabla_{x}(S\varphi)(x,t),n(x_{0})\big\rangle=-\frac{1}{2}\varphi(x_{0},t)\\&+\int_{0}^{t}\int_{\partial\Omega}
\frac{\partial\varepsilon_{n, a}(x_{0}-\xi,b(t,\tau))}{\partial n(x_{0}) }\varphi(\xi,\tau)a(\tau)dS_{\xi} d\tau,
\end{aligned}
\end{equation}
where the limit is taken along the inward normal $n(x_{0})$ and $K^{'}:=K^{'}(x_{0})\subset\overline{\Omega}^{c}\cup\{x_{0}\}$. The proof of relation \eqref{jumpsingle} is simlilar to the one of Theorem \ref{thm3.2}.
Now we show the jump relation for the double layer potential for the degenerate parabolic equation \eqref{e1.1}.

\begin{thm}\label{thm3.3}
	The double layer potential \eqref{doublelayer} with the density $\varphi \in C(\partial  \Omega \times [0,T])$ can be continuously extended from $\Omega \times (0,T]$ to $\overline{\Omega} \times (0,T]$ with the limiting values
	\begin{equation*}
	\lim\limits_{x\to x_{0}}(D\varphi)(x,t)=-\frac{1}{2}\varphi(x_{0},t)+\int_{0}^{t}\int_{\partial\Omega}
	\frac{\partial\varepsilon_{n, a}(x_{0}-\xi,b(t,\tau))}{\partial\nu(\xi) }\varphi(\xi,\tau)a(\tau)dS_{\xi} d\tau,
	\end{equation*}	
	for $x_{0} \in \partial\Omega$ and $0<t\leq T$, where  the time integral exists as an improper integral and $\nu(\xi)$ is the outward normal.
\end{thm}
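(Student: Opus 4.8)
The plan is to follow the proof of Theorem~\ref{thm3.2} closely. The key observation is that, after the change of variable $\rho=\frac{|x-\xi|^{2}}{4b(t,\tau)}$ in the time integral, the ``degenerate parabolic'' double layer potential collapses to a classical elliptic-type double layer potential with a continuous density, so the boundary analysis of \cite[Theorem 1, Section 5.2]{F4} carries over almost verbatim.

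First I would record the explicit form of the kernel. Since $\varepsilon_{n, a}(x-\xi,b(t,\tau))$ depends on $\xi$ only through $x-\xi$, differentiating the Gaussian gives
\[
\frac{\partial\varepsilon_{n, a}(x-\xi,b(t,\tau))}{\partial\nu(\xi)}=\frac{\langle x-\xi,\nu(\xi)\rangle}{2^{n+1}\pi^{\frac{n}{2}}\big[b(t,\tau)\big]^{\frac{n}{2}+1}}\,e^{-\frac{|x-\xi|^{2}}{4b(t,\tau)}}.
\]
I would then check that $\dfrac{\partial\varepsilon_{n, a}(x-\xi,b(t,\tau))}{\partial\nu(\xi)}a(\tau)$ is integrable over $\partial\Omega\times(0,t)$ uniformly for $x$ near a fixed $x_{0}\in\partial\Omega$; this makes $D\varphi$ well defined (for $x=x_{0}$ as the improper integral $\lim_{h\to0}\int_{0}^{t-h}\int_{\partial\Omega}$) and justifies interchanging the orders of integration below. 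Using \eqref{inq6} with $s=\frac{|x-\xi|^{2}}{4b(t,\tau)}$ and $\beta=\frac{n}{2}+1-\gamma$, the estimate $|\langle x-\xi,\nu(\xi)\rangle|\le C|x-\xi|^{1+\lambda}$ valid near the diagonal on a $C^{1+\lambda}$ surface (a consequence of \eqref{inq1}, \eqref{inq2} and $\langle x_{0}-\xi',\nu(x_{0})\rangle=0$; cf.\ \cite[p.\ 135]{F4}), and the substitution $u=b(t,\tau)$, $du=-a(\tau)\,d\tau$, one bounds the kernel by $C\,a(\tau)\big[b(t,\tau)\big]^{-\gamma}|x-\xi|^{-(n+1-2\gamma-\lambda)}$, which is integrable precisely for $1-\frac{\lambda}{2}<\gamma<1$ — the same range as in Theorem~\ref{thm3.2}.

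Next I would perform the substitution $\tau\leftrightarrow\rho=\frac{|x-\xi|^{2}}{4b(t,\tau)}$ in the inner integral, exactly as in the derivation of \eqref{e3.2}. Using $b(t,\tau)=\frac{|x-\xi|^{2}}{4\rho}$ and $\frac{a(\tau)\,d\tau}{[b(t,\tau)]^{2}}=\frac{4\,d\rho}{|x-\xi|^{2}}$, and then interchanging the orders of integration, this yields
\[
(D\varphi)(x,t)=\int_{\partial\Omega}\frac{\langle x-\xi,\nu(\xi)\rangle}{|x-\xi|^{n}}\,\Psi(x,\xi,t)\,dS_{\xi},\qquad
\Psi(x,\xi,t):=\frac{1}{2\pi^{\frac{n}{2}}}\int_{\frac{|x-\xi|^{2}}{4a_{1}(t)}}^{\infty}\rho^{\frac{n}{2}-1}e^{-\rho}\varphi\big(\xi,\tau(\rho)\big)\,d\rho .
\]
Since $a_{1}$ is continuous and strictly increasing under assumption~(a), $\tau(\rho)$ depends continuously on $\tfrac{|x-\xi|^{2}}{4\rho}$, so $\Psi$ is continuous in $(x,\xi)$ for $x$ near $x_{0}$ and $\xi\in\partial\Omega$; moreover $\tau(\rho)\to t$ as $|x-\xi|\to0$ with $\rho$ fixed, so by dominated convergence and $\Gamma\big(\tfrac{n}{2}\big)=\tfrac{2\pi^{n/2}}{\omega_{n}}$ one has $\Psi(x,\xi,t)\to\frac{\varphi(x_{0},t)}{\omega_{n}}$ as $x,\xi\to x_{0}$. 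Thus $D\varphi$ is an elliptic double layer potential with continuous density, and the remaining steps copy Theorem~\ref{thm3.2}: write $(D\varphi)(x,t)=I_{r}(x,t)+J_{r}(x,t)$ (integration over $\partial\Omega_{r}$ and over $\partial\Omega\setminus\partial\Omega_{r}$), observe that $J_{r}$ is continuous at $x_{0}$ since there $|x-\xi|$ stays bounded below, introduce the frozen-density tangent-plane approximation $I_{r}'(x,t)$ (with $\nu(x_{0})$, $\partial\Omega_{r}'$ and $\varphi(x_{0},\cdot)$ in place of $\nu(\xi)$, $\partial\Omega_{r}$ and $\varphi$), and prove $\lim_{x\to x_{0}}I_{r}'(x,t)=-\tfrac12\varphi(x_{0},t)$ and $\lim_{x\to x_{0}}(I_{r}(x,t)-I_{r}'(x,t))=I_{r}(x_{0},t)$. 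The first is the solid-angle computation: with $x=x_{0}-l\nu(x_{0})$ one has $\langle x-\xi',\nu(x_{0})\rangle=-l$ and $|x-\xi'|^{2}=l^{2}+|x_{0}-\xi'|^{2}$, so after rescaling $I_{r}'(x,t)\to\frac{\varphi(x_{0},t)}{\omega_{n}}\big(-\int_{\mathbb{R}^{n-1}}\frac{dy}{(1+|y|^{2})^{n/2}}\big)=-\tfrac12\varphi(x_{0},t)$, using $\int_{\mathbb{R}^{n-1}}\frac{dy}{(1+|y|^{2})^{n/2}}=\frac{\omega_{n}}{2}$; the second is controlled, exactly as in \eqref{e3.7}--\eqref{e3.9}, by \eqref{inq1}--\eqref{inq2} and the mean value theorem for the Gaussian, which leave a residual singularity of order $|x-\xi|^{-(n+1-2\gamma-\lambda)}$, integrable for $1-\tfrac{\lambda}{2}<\gamma<1$ and with a contribution small uniformly in $x$. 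Adding the pieces gives the claimed limiting value.

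The main obstacle, as in Theorem~\ref{thm3.2}, is making rigorous the comparison between the curved boundary $\partial\Omega$ and its tangent hyperplane $T(x_{0})$: one must check that after freezing the density the difference kernels are simultaneously integrable and uniformly small as $x\to x_{0}$, which is exactly where $\partial\Omega\in C^{1+\lambda}$ and the choice $1-\tfrac{\lambda}{2}<\gamma<1$ enter. The one genuinely new point relative to Theorem~\ref{thm3.2} is the sign of the kernel — here $+\langle x-\xi,\nu(\xi)\rangle$ rather than $-\langle x-\xi,\nu(x_{0})\rangle$ — which is precisely what turns the $+\tfrac12$ of \eqref{e3.1} into the $-\tfrac12$ of the present statement.
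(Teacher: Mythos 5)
Your proposal is correct and follows exactly the route the paper intends: its entire proof of Theorem \ref{thm3.3} is the single sentence that one uses ``the same technique in Theorem \ref{thm3.2}'', and your argument is precisely that technique carried out in detail (time substitution $\rho=\tfrac{|x-\xi|^{2}}{4b(t,\tau)}$, reduction to an elliptic-type double layer with continuous density, the $I_{r},J_{r},I_{r}'$ decomposition, and the solid-angle computation giving $-\tfrac12$ because the kernel carries $+\langle x-\xi,\nu(\xi)\rangle$ rather than $-\langle x-\xi,\nu(x_{0})\rangle$). No discrepancy with the paper; you have simply supplied the details it omits.
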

\begin{proof}[Proof of Theorem \ref{thm3.3}]
	For the proof  we use  the same technique in Theorem \ref{thm3.2}.
\end{proof}
Consider the operator $D:C\big(\partial\Omega\times[0,T]\big)\to C\big(\partial\Omega\times[0,T]\big)$ defined by $$(D\varphi)(x,t):=\int_{0}^{t}\int_{\partial\Omega}
\frac{\partial\varepsilon_{n, a}(x-\xi,b(t,\tau))}{\partial\nu(\xi) }\varphi(\xi,\tau)a(\tau)dS_{\xi} d\tau,$$
for $x \in \partial\Omega$ and $0<t<T$ with the improper time integral over $(0,T)$. Here $a(t)$ satisfies the assumption (a).
Now we introduce a new variable $z$ given by
$$z:=b(t,\tau).$$

This substitution gives an implicit function $\tau=\tau(z)$ and $\varepsilon_{n, a}(x-\xi,b(t,\tau))=\varepsilon_{n}(x-\xi,z)$. Then the operator $D$ can be written in the form

\begin{equation}\label{doublelayerH}
(D\varphi)(x,t)=\int_{0}^{a_{1}(t)}\int_{\partial\Omega}
\frac{\partial\varepsilon_{n}(x-\xi,z)}{\partial\nu(\xi) }\varphi(\xi,\tau(z))dS_{\xi} dz.
\end{equation}
By the equality
$$\big|\big\langle x-\xi,\nu(\xi)\big\rangle\big|\leq |x-\xi|^{1+\lambda},\quad x,\xi\in\partial\Omega\in C^{1+\lambda},$$ and  estimates in \eqref{inq7}-\eqref{inq8} we obtain the estimate
\begin{equation}\label{e3.11}
\bigg|\frac{\partial\varepsilon_{n}(x-\xi,z )}{\partial\nu(\xi)}\bigg|\leq \frac{M}{z^{\gamma}|x-\xi|^{n+1-2\gamma-\lambda}},~~z>0,~x\neq\xi,
\end{equation}
for all $0<\gamma<1+\frac{n}{2}$ and some constant $M>0$ which depends on $L$ and $\gamma$. From here if we choose $\gamma$ such that $1-\frac{\lambda}{2}<\gamma<1$, we see that the kernel of $D$ is weakly singular with respect to the integrals over $\Omega$ and over time.

From \eqref{e3.11} we see that $(D\varphi)(\cdot,0)=0$ in $\Omega$. Thus, $D\varphi$ is continuous in $\overline{\Omega\times (0,T)}$ only if $\varphi(\cdot,0)=0$ on $\partial\Omega$. Moreover, the density $\varphi$ can be continuously extended to $\partial\Omega\times (-\infty,T]$ by setting $\varphi(\cdot,t)=0$ for $t<0$ in $\Omega$. Then, from Theorem \ref{thm3.3} we see that the double layer potential is a solution of the homogeneous equation 
\begin{equation}\label{e3.12}
\lozenge_{a}u = 0,\quad\textrm{in}~\Omega\times(0,T),
\end{equation}
with  the initial condition
\begin{equation}\label{e3.13}
u(\cdot,0)=0, \quad\textrm{in}~\Omega,
\end{equation}
and the boundary condition
\begin{equation}\label{e3.14}
u=g, \quad \textrm{on}~\partial\Omega\times(0,T),  
\end{equation}    
provided the continuous density $\varphi$ solves the following boundary integral equation
\begin{equation}\label{e3.15}
\Big(-\frac{1}{2}I+D\Big)(\varphi)=g,
\end{equation}
where $I$ is the identity operator and $g$ satisfies the compatibility condition
\begin{equation}\label{e3.16}
g(\cdot,0)=0, \quad \textrm{on}~\partial\Omega.
\end{equation}
Here we assumed that $g$ satisfies condition \eqref{e3.16} to have solvability of  problem \eqref{e3.12}-\eqref{e3.14}.
The following theorems and corollary are valid.
\begin{thm} \label{thm3.4}
	The double layer operator $D:C\big(\partial\Omega\times[0,T]\big)\to C\big(\partial\Omega\times[0,T]\big)$ is compact.
\end{thm}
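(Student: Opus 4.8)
The plan is to obtain compactness from the Arzel\`a--Ascoli theorem applied on the compact metric space $\partial\Omega\times[0,T]$: it is enough to show that the image $D(\mathcal B)$ of the closed unit ball $\mathcal B\subset C(\partial\Omega\times[0,T])$ is uniformly bounded and equicontinuous (the estimates below use only $\|\varphi\|_\infty$, so $\varphi$ may as well be bounded measurable). Fix once and for all an exponent $\gamma$ with $1-\frac\lambda2<\gamma<1$ and set $\alpha:=n+1-2\gamma-\lambda\in(0,n-1)$; by \eqref{e3.11} one has $\bigl|\partial\varepsilon_n(x-\xi,z)/\partial\nu(\xi)\bigr|\le M z^{-\gamma}|x-\xi|^{-\alpha}$, and because $\partial\Omega\in C^{1+\lambda}$ the bounds $C_\Omega:=\sup_{x\in\partial\Omega}\int_{\partial\Omega}|x-\xi|^{-\alpha}\,dS_\xi<\infty$ and $\int_{\partial\Omega\cap\{|x-\xi|<\rho\}}|x-\xi|^{-\alpha}\,dS_\xi\le C\rho^{\,n-1-\alpha}$ hold with constants independent of $x$. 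I will repeatedly use that, under assumption (a), $a_1$ is continuous and strictly increasing, so $\tau\mapsto b(t,\tau)$ is a decreasing homeomorphism of $[0,t]$ onto $[0,a_1(t)]$ and $\int_0^tF(b(t,\tau))a(\tau)\,d\tau=\int_0^{a_1(t)}F(z)\,dz$. With the substitution $z=b(t,\tau)$ in \eqref{doublelayer} and \eqref{e3.11}, $|(D\varphi)(x,t)|\le M\|\varphi\|_\infty C_\Omega\int_0^{a_1(T)}z^{-\gamma}\,dz<\infty$, which gives uniform boundedness of $D(\mathcal B)$ (and, in passing, absolute convergence of the improper time integral).

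For equicontinuity I bound increments in $x$ and in $t$ separately. For a spatial increment with $t$ fixed, factoring out $\|\varphi\|_\infty$ and substituting $z=b(t,\tau)$ shows that $|(D\varphi)(x,t)-(D\varphi)(x',t)|$ is at most $\int_0^{a_1(T)}\!\int_{\partial\Omega}\bigl|\partial_{\nu(\xi)}\varepsilon_n(x-\xi,z)-\partial_{\nu(\xi)}\varepsilon_n(x'-\xi,z)\bigr|\,dS_\xi\,dz$. Splitting $\partial\Omega$ at $|x-\xi|=2\eta$, the cap contribution is $O(\eta^{\,n-1-\alpha})$ uniformly in $x$ by \eqref{e3.11} and the surface bound, while off the cap, for $|x-x'|<\eta$ the segment joining $x$ to $x'$ stays at distance $\ge\eta$ from $\xi$ and $y\mapsto\partial_{\nu(\xi)}\varepsilon_n(y-\xi,z)$ is $C^1$ there with gradient bounded by some $M_\eta$ uniformly for $0\le z\le a_1(T)$, so that part is $\le M_\eta\,|\partial\Omega|\,a_1(T)\,|x-x'|$. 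Choosing first $\eta$ and then $|x-x'|$ small makes the increment $<\varepsilon$ uniformly in $x$, $t$ and $\varphi\in\mathcal B$.

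The delicate point --- and the only real obstacle --- is the time increment, since the crude bound $|\varphi(\xi,\tau)-\varphi(\xi,\tau')|\le2\|\varphi\|_\infty$ is useless and a genuine modulus of continuity of $\varphi$ is not uniform over $\mathcal B$. The way around it is to compare $(D\varphi)(x,t)$ and $(D\varphi)(x,t')$, say $t<t'$, in the \emph{original} variable $\tau$ of \eqref{doublelayer}, before any substitution, so that the argument of $\varphi$ is $t$-independent:
\[
(D\varphi)(x,t')-(D\varphi)(x,t)=\int_0^{t}\!\int_{\partial\Omega}\Bigl[\tfrac{\partial\varepsilon_{n,a}(x-\xi,b(t',\tau))}{\partial\nu(\xi)}-\tfrac{\partial\varepsilon_{n,a}(x-\xi,b(t,\tau))}{\partial\nu(\xi)}\Bigr]\varphi(\xi,\tau)a(\tau)\,dS_\xi\,d\tau+R,
\]
where $R=\int_t^{t'}\!\int_{\partial\Omega}\frac{\partial\varepsilon_{n,a}(x-\xi,b(t',\tau))}{\partial\nu(\xi)}\varphi(\xi,\tau)a(\tau)\,dS_\xi\,d\tau$. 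Now $b(t',\tau)=b(t,\tau)+h$ with $h:=a_1(t')-a_1(t)\ge0$, so substituting $z=b(t,\tau)$ in the first term and $z=b(t',\tau)$ in $R$ bounds the whole expression by
\[
\|\varphi\|_\infty\int_0^{a_1(T)}\!\!\int_{\partial\Omega}\bigl|\partial_{\nu(\xi)}\varepsilon_n(x-\xi,z+h)-\partial_{\nu(\xi)}\varepsilon_n(x-\xi,z)\bigr|\,dS_\xi\,dz+M\|\varphi\|_\infty C_\Omega\frac{h^{1-\gamma}}{1-\gamma}.
\]
The last term is $O(h^{1-\gamma})$; the integral is split at $z=\eta$, with the part $z<\eta$ of size $O(\eta^{1-\gamma})$ by \eqref{e3.11} and $C_\Omega$, and the part $\eta\le z\le a_1(T)$ at most $|\partial\Omega|\,a_1(T)\,\omega_\eta(h)$, where $\omega_\eta$ is the modulus of uniform continuity of $(x,\xi,z)\mapsto\partial_{\nu(\xi)}\varepsilon_n(x-\xi,z)$ on the compact set $\partial\Omega\times\partial\Omega\times[\eta,a_1(T)+1]$. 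Choosing $\eta$ small and then $h$ small --- and noting $h\to0$ as $|t-t'|\to0$ because $a_1$ is continuous --- makes this $<\varepsilon$ uniformly in $x$ and $\varphi\in\mathcal B$. Together with the spatial estimate and the triangle inequality this gives joint equicontinuity of $D(\mathcal B)$, and Arzel\`a--Ascoli then yields that $D$ is compact. (Alternatively, the substitution $\tau\mapsto a_1(\tau)$ turns $D$ into the classical heat double layer operator in the time variable $a_1(t)$, pre- and post-composed with bounded relabelling maps, so one could also quote the classical compactness theorem.)
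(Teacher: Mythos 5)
Your proof is correct, but it takes a genuinely different route from the paper. The paper's entire argument is one sentence: after the substitution $z=b(t,\tau)$ it observes that the kernel in \eqref{doublelayerH} is weakly singular (via \eqref{e3.11} with $1-\frac{\lambda}{2}<\gamma<1$) and cites Kress's compactness theorems for integral operators with weakly singular kernels. You instead verify the Arzel\`a--Ascoli hypotheses directly: uniform boundedness plus equicontinuity of $D(\mathcal B)$, with the spatial and temporal increments handled separately. What your version buys is that it genuinely addresses the features that a bare citation glosses over --- the Volterra structure (the $\tau$-domain of integration depends on $t$), the anisotropic product singularity $z^{-\gamma}|x-\xi|^{-\alpha}$, and above all the time increment, where your device of keeping $\varphi(\xi,\tau)$ in the untransformed variable so that only the kernel difference $\partial_{\nu(\xi)}\varepsilon_n(x-\xi,z+h)-\partial_{\nu(\xi)}\varepsilon_n(x-\xi,z)$ carries the $t$-dependence is exactly the right move to avoid needing a modulus of continuity of $\varphi$ (which is not uniform over the unit ball). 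The cost is length; the paper's citation is shorter and, in spirit, your closing parenthetical (the change of time variable $\tau\mapsto a_1(\tau)$ reducing $D$ to the classical heat double layer operator) is precisely the reduction the paper performs in \eqref{doublelayerH} before invoking Kress. Both arguments are sound; yours is the self-contained one.
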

\begin{proof}[Proof of Theorem \ref{thm3.4}]
	Since the kernel of $D$ is weakly singular, we apply [\cite{K3}, Theorem 2.29 and Theorem 2.30] to complete the proof.
\end{proof}
\begin{cor} \label{coro3.5}
	The double layer potential \eqref{doublelayer} is continuous in $\overline{\Omega\times (0,T)} $ provided that the continuous density $\varphi$ satisfies the condition $\varphi(\cdot,0)=0$ on $\partial\Omega$.
\end{cor}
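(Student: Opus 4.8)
The plan is to glue together three pieces of information already in hand: the interior smoothness of $D\varphi$ in $\Omega\times(0,T)$, the continuous extension to the lateral boundary $\partial\Omega\times(0,T]$ furnished by Theorem \ref{thm3.3}, and the decay of $D\varphi$ near the bottom face $\overline{\Omega}\times\{0\}$ coming from the weakly singular bound \eqref{e3.11}. The only non-automatic point is the compatibility of these pieces along the ``corner'' set $\partial\Omega\times\{0\}$, and it is precisely there that the hypothesis $\varphi(\cdot,0)=0$ is consumed.

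First, recall that $D\varphi$ is a $C^{\infty}$ solution of \eqref{i2.12} in $\Omega\times(0,T)$ and, by Theorem \ref{thm3.3}, extends continuously from $\Omega\times(0,T]$ to $\overline{\Omega}\times(0,T]$, with boundary value at $x_{0}\in\partial\Omega$ equal to $-\tfrac12\varphi(x_{0},t)$ plus the improper boundary integral \eqref{doublelayerH}. Hence it suffices to set $(D\varphi)(x,0):=0$ for every $x\in\overline{\Omega}$ and to show that with this assignment $D\varphi$ is continuous across $\overline{\Omega}\times\{0\}$; equivalently, that $(D\varphi)(x,t)\to0$ as $t\to0+$, uniformly for $x\in\overline{\Omega}$.

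Second, fix an exponent $\gamma\in(1-\tfrac\lambda2,1)$ as in \eqref{e3.11}. Since $\partial\Omega\in C^{1+\lambda}$, the surface integral $\int_{\partial\Omega}|x-\xi|^{-(n+1-2\gamma-\lambda)}\,dS_{\xi}$ is bounded by a constant independent of $x\in\overline{\Omega}$ (the exponent is strictly less than $n-1=\dim\partial\Omega$), while in the representation \eqref{doublelayerH} one has $\int_{0}^{a_{1}(t)}z^{-\gamma}\,dz=a_{1}(t)^{1-\gamma}/(1-\gamma)$. Combining these with \eqref{e3.11} gives
\[
\Bigl|\int_{0}^{a_{1}(t)}\int_{\partial\Omega}\frac{\partial\varepsilon_{n}(x-\xi,z)}{\partial\nu(\xi)}\,\varphi(\xi,\tau(z))\,dS_{\xi}\,dz\Bigr|\le C\,\|\varphi\|_{L^{\infty}(\partial\Omega\times[0,T])}\,a_{1}(t)^{1-\gamma},\qquad x\in\overline{\Omega},
\]
and $a_{1}$ is continuous with $a_{1}(0)=0$. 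For $x\in\Omega$ the left-hand side is $(D\varphi)(x,t)$ itself, so $(D\varphi)(x,t)\to0$ uniformly on $\Omega$ as $t\to0+$. For $x_{0}\in\partial\Omega$ the extended value equals the same integral minus $\tfrac12\varphi(x_{0},t)$; here the hypothesis enters, since $\varphi$ is uniformly continuous on the compact set $\partial\Omega\times[0,T]$ and $\varphi(\cdot,0)\equiv0$, whence $\sup_{x_{0}\in\partial\Omega}|\varphi(x_{0},t)|\to0$ as $t\to0+$. Therefore $(D\varphi)(x,t)\to0$ as $t\to0+$ uniformly over $x\in\overline{\Omega}$, matching the prescribed value on $\overline{\Omega}\times\{0\}$; together with the continuity already available on $\overline{\Omega}\times(0,T]$ this yields continuity of $D\varphi$ on all of $\overline{\Omega\times(0,T)}$.

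I expect the main obstacle to be exactly this last gluing at $\partial\Omega\times\{0\}$: one must verify that the ``$-\tfrac12\varphi$'' jump term produced by Theorem \ref{thm3.3} does not create a mismatch between the interior limit (which is $0$) and the lateral-boundary limit as $t\to0+$, and that the convergence is uniform in the boundary point — this is where $\varphi(\cdot,0)=0$ and the compactness of $\partial\Omega\times[0,T]$ are both needed. Everything else reduces to bookkeeping with the weakly singular estimate \eqref{e3.11}.
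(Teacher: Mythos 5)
Your argument is correct and follows essentially the same route as the paper, which obtains the corollary by combining the continuous extension to $\overline{\Omega}\times(0,T]$ from Theorem \ref{thm3.3} with the observation, drawn from the weakly singular bound \eqref{e3.11}, that $D\varphi$ vanishes as $t\to0+$ so that the hypothesis $\varphi(\cdot,0)=0$ is exactly what removes the mismatch from the $-\tfrac{1}{2}\varphi$ jump term at $\partial\Omega\times\{0\}$. You merely spell out in more detail the uniform estimate $|(D\varphi)(x,t)|\le C\|\varphi\|_{L^{\infty}}a_{1}(t)^{1-\gamma}$ that the paper leaves implicit.
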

\begin{thm}\label{thm3.6}
	The double layer potential \eqref{doublelayer} is a solution of  the initial boundary value problem	\eqref{e3.12}-\eqref{e3.14} provided $\varphi \in C(\partial\Omega\times[0,T])$ solves the boundary integral equation \eqref{e3.15} for all $x \in\partial\Omega$ and $t\in(0,T)$.	
\end{thm}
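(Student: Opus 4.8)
The plan is to verify, one at a time, that the double layer potential $D\varphi$ with a density $\varphi$ solving \eqref{e3.15} has the three properties listed in \eqref{e3.12}--\eqref{e3.14}; all the analytic work has in fact already been done, so this amounts to combining Theorem \ref{thm3.3}, Corollary \ref{coro3.5}, and the remarks around \eqref{e3.11}. First, \eqref{e3.12} is immediate: differentiating \eqref{doublelayer} under the integral sign is legitimate because, for fixed $x\in\Omega$, the kernel $\partial_{\nu(\xi)}\varepsilon_{n, a}(x-\xi,b(t,\tau))$ is smooth in $(x,t)$ away from $\xi\in\partial\Omega$ and the $(\xi,\tau)$-integral converges absolutely (by \eqref{e3.11} with $\gamma\in(1-\tfrac{\lambda}{2},1)$, the kernel being weakly singular in space and time), while each $\varepsilon_{n, a}(x-\xi,b(t,\tau))$ solves $\lozenge_{a}u=0$ in $\Omega\times(0,T)$; hence $D\varphi\in C^{\infty}$ solves \eqref{e3.12}, as already noted before Theorem \ref{thm3.2}.

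Next, for the initial condition \eqref{e3.13} I would use \eqref{e3.11} in the representation \eqref{doublelayerH}: for $x$ in a compact subset of $\Omega$ the inner surface integral is bounded by $Cz^{-\gamma}$, so $|(D\varphi)(x,t)|\le C'\int_{0}^{a_{1}(t)}z^{-\gamma}\,dz=C'\dfrac{a_{1}(t)^{1-\gamma}}{1-\gamma}\to0$ as $t\to0+$, giving $(D\varphi)(x,0)=0$ for $x\in\Omega$. Moreover, the compatibility condition \eqref{e3.16} together with \eqref{e3.15} evaluated at $t=0$ forces $\varphi(\cdot,0)=0$ on $\partial\Omega$, since $(D\varphi)(\cdot,0)=0$ gives $-\tfrac12\varphi(\cdot,0)=g(\cdot,0)=0$; hence Corollary \ref{coro3.5} applies, $D\varphi$ is continuous on $\overline{\Omega\times(0,T)}$, and \eqref{e3.13} holds in the classical sense.

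Finally, the boundary condition \eqref{e3.14} --- the only step that genuinely uses the hypothesis --- follows from the jump relation of Theorem \ref{thm3.3}. For $x_{0}\in\partial\Omega$ and $t\in(0,T)$ the unrestricted limit yields
\begin{equation*}
\begin{aligned}
\lim_{x\to x_{0}}(D\varphi)(x,t)
&=-\frac{1}{2}\varphi(x_{0},t)+\int_{0}^{t}\int_{\partial\Omega}\frac{\partial\varepsilon_{n, a}(x_{0}-\xi,b(t,\tau))}{\partial\nu(\xi)}\varphi(\xi,\tau)a(\tau)\,dS_{\xi}\,d\tau\\
&=\Big(-\tfrac{1}{2}I+D\Big)(\varphi)(x_{0},t)=g(x_{0},t),
\end{aligned}
\end{equation*}
where on the right-hand side $D$ denotes the boundary operator \eqref{doublelayerH} and the last equality is \eqref{e3.15}. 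Thus the trace of $D\varphi$ on $\partial\Omega\times(0,T)$ equals $g$, which is precisely \eqref{e3.14}.

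I expect the main obstacle to lie not in this assembly but entirely in the ingredients established earlier --- the jump relation of Theorem \ref{thm3.3} and the weak-singularity estimate \eqref{e3.11} --- so that the proof of Theorem \ref{thm3.6} reduces to the bookkeeping above. The one point worth recording carefully is that the limit in Theorem \ref{thm3.3} is taken as $x\to x_{0}$ without restriction to a cone, which is exactly what is needed to read the limiting value as a genuine boundary trace; and one should note that, because $\varphi$ is continuous with $\varphi(\cdot,0)=0$, the improper time integral defining $(D\varphi)(x_{0},t)$ converges, so every term above is well defined.
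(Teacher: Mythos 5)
Your proposal is correct and follows essentially the same route as the paper: the paper's proof of Theorem \ref{thm3.6} simply invokes Theorem \ref{thm3.3} (for the boundary trace via the jump relation and \eqref{e3.15}) and Corollary \ref{coro3.5} (for continuity up to $t=0$), noting exactly as you do that the compatibility condition \eqref{e3.16} forces $\varphi(\cdot,0)=0$ for solutions of \eqref{e3.15}. Your write-up merely makes explicit the details (differentiation under the integral for \eqref{e3.12}, the bound $|(D\varphi)(x,t)|\le C\,a_{1}(t)^{1-\gamma}/(1-\gamma)$ for \eqref{e3.13}) that the paper leaves to the earlier remarks around \eqref{e3.11}.
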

\begin{proof}[Proof of Theorem \ref{thm3.6}]
	This follows from Theorem \ref{thm3.3} and Corollary \ref{coro3.5}. The compatibility condition  for $g$ \eqref{e3.16} ensures the identity $\varphi(\cdot,0)=0$ on $\partial\Omega$ for solutions to \eqref{e3.15}.
\end{proof}
Since the integral operator $D$ is compact, equation \eqref{e3.15} is solvable for each $g \in C\big(\partial\Omega\times[0,T]\big)$  by the Riesz theory \cite[Corollary 3.5]{K3}, if the homogeneous equation $\big(-\frac{1}{2}I+D\big)(\varphi)=0$ has only the solution $\varphi=0$ on $\partial\Omega\times[0,T]$. We note that  estimate \eqref{e3.11} is equivalent to
\begin{equation}\label{est1}
\bigg|\frac{\partial\varepsilon_{n,a}(x-\xi,b(t,\tau) )}{\partial\nu(\xi)}a(\tau)\bigg|\leq \frac{Ma(\tau)}{\big[b(t,\tau)\big]^{\gamma}|x-\xi|^{n+1-2\gamma-\lambda}}.
\end{equation}
From \eqref{est1} we have the estimate

\begin{equation}\label{e3.17}
\|(D\varphi)(\cdot,t)\|_{L^{\infty}{(\partial\Omega)}}\leq C\int_{0}^{t}\frac{a(\tau)}{\big[b(t,\tau)\big]^{\gamma}}\|\varphi(\cdot,\tau)\|_{L^{\infty}{(\partial\Omega)}}d\tau.
\end{equation}
for all $t \in (0,T]$ and some constant $C>0$  which depends on $\partial\Omega$ and $\gamma$.

Repeating this argument (by induction), we obtain
\begin{equation*}
\|(D^{k}\varphi)(\cdot,t)\|_{L^{\infty}{(\partial\Omega)}}\leq C^{k}B^{k-1}\int_{0}^{t}\frac{a(\tau)}{\big[b(t,\tau)\big]^{k(\gamma-1)+1}}\|\varphi(\cdot,\tau)\|_{L^{\infty}{(\partial\Omega)}}d\tau,
\end{equation*}
for all $k \in \mathbb{N}$ and $t\in (0,T]$, where
\begin{equation*}
B:=\int_{0}^{1}\frac{ds}{[s(1-s)]^{\gamma}}.
\end{equation*}
Hence,  there exists an integer $k_{0}$ such that

\begin{equation}\label{e3.20}
\|(D^{k_{0}}\varphi)(\cdot,t)\|_{L^{\infty}{(\partial\Omega)}}\leq Q\int_{0}^{t}\|\varphi(\cdot,\tau)\|_{L^{\infty}{(\partial\Omega)}}d\tau,
\end{equation}
for all $t\in [0,T]$ and some constant $Q>0$.

Let $\varphi$ be a solution of the equation $\big(-\frac{1}{2}I+D\big)(\varphi)=0$. Then, by iteration, we see that $\varphi$ solves 
$$\left(-\frac{1}{2}I+D^{k_{0}}\right)(\varphi)=0.$$ 
So, \eqref{e3.20} implies the following estimate
$$\|\varphi(\cdot,\tau)\|_{L^{\infty}{(\partial\Omega)}}\leq\|\varphi\|_{L^{\infty}}\frac{Q^{n}t^{n}}{n!},\quad t\in[0,T],$$
for all $n\in \mathbb{N}$. Hence, $\varphi=0$ on $\partial\Omega\times[0,T]$. Thus, we have proved the existence and uniqueness theorem.
\begin{thm}\label{thm3.7}
	The initial boundary problem \eqref{e3.12}-\eqref{e3.14} with boundary datum in $C\big(\partial\Omega\times[0,T]\big)$ satisfying the condition \eqref{e3.16} has a unique solution $u \in C\big(\overline{\Omega\times(0,T)}\big)$ and the solution can be given as the double layer potential with the density $\varphi$, where $\varphi$ satisfies integral equation \eqref{e3.15}.
\end{thm}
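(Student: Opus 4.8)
The plan is to split the statement into an existence part, obtained by assembling the results already established in this section, and a uniqueness part, for which I would prove and then use a weak maximum principle for the degenerate operator $\lozenge_{a}$.

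\emph{Existence.} By Theorem~\ref{thm3.4} the operator $D\colon C(\partial\Omega\times[0,T])\to C(\partial\Omega\times[0,T])$ is compact, so the Riesz--Fredholm theory applies to $-\frac{1}{2}I+D$. To invoke it one needs that the homogeneous equation $\big(-\frac{1}{2}I+D\big)\varphi=0$ has only the trivial solution: iterating it turns $\varphi$ into a solution of $\big(-\frac{1}{2}I+D^{k_{0}}\big)\varphi=0$, and then \eqref{e3.20} forces $\|\varphi(\cdot,t)\|_{L^{\infty}(\partial\Omega)}\le \|\varphi\|_{L^{\infty}}Q^{n}t^{n}/n!$ for all $n$, hence $\varphi\equiv 0$. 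Consequently, for the prescribed datum $g\in C(\partial\Omega\times[0,T])$ the boundary integral equation \eqref{e3.15} has a unique solution $\varphi\in C(\partial\Omega\times[0,T])$; evaluating \eqref{e3.15} at $t=0$ and using $(D\varphi)(\cdot,0)=0$ (from \eqref{e3.11}) together with the compatibility condition \eqref{e3.16} gives $\varphi(\cdot,0)=0$ on $\partial\Omega$. Then Corollary~\ref{coro3.5} shows that the double layer potential $u:=D\varphi$ of \eqref{doublelayer} is continuous on $\overline{\Omega\times(0,T)}$, and Theorem~\ref{thm3.6} (which in turn rests on Theorem~\ref{thm3.3}) shows that this $u$ solves \eqref{e3.12}--\eqref{e3.14}. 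This produces the desired solution.

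\emph{Uniqueness.} Let $u_{1},u_{2}\in C(\overline{\Omega\times(0,T)})$ solve \eqref{e3.12}--\eqref{e3.14} and set $w:=u_{1}-u_{2}$. Then $w$ is continuous on $\overline{\Omega\times(0,T)}=\overline{\Omega}\times[0,T]$, it is $C^{\infty}$ in $\Omega\times(0,T)$ (any solution of $\lozenge_{a}w=0$ becomes, via the time substitution $s=a_{1}(t)$ on each interval where $a>0$, a solution of the classical heat equation, hence smooth; under assumption (a) this substitution is a homeomorphism of $[0,T]$ onto $[0,a_{1}(T)]$ since $a_{1}$ is strictly increasing), it satisfies $\lozenge_{a}w=0$ in $\Omega\times(0,T)$, and it vanishes on the parabolic boundary $\mathcal P:=\big(\overline{\Omega}\times\{0\}\big)\cup\big(\partial\Omega\times[0,T]\big)$ by the homogeneous data \eqref{e3.13}--\eqref{e3.14} and \eqref{e3.16}. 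I would then run the classical weak maximum principle argument: for $\varepsilon>0$ put $v_{\varepsilon}:=w-\varepsilon t$, so $\lozenge_{a}v_{\varepsilon}=-\varepsilon<0$ in $\Omega\times(0,T)$. If $v_{\varepsilon}$ attained its maximum over the compact set $\overline{\Omega}\times[0,T]$ at some $(x_{0},t_{0})$ with $x_{0}\in\Omega$ and $t_{0}>0$, then $\nabla_{x}v_{\varepsilon}(x_{0},t_{0})=0$, $\Delta_{x}v_{\varepsilon}(x_{0},t_{0})\le 0$ and $\partial_{t}v_{\varepsilon}(x_{0},t_{0})\ge 0$, whence $\lozenge_{a}v_{\varepsilon}(x_{0},t_{0})=\partial_{t}v_{\varepsilon}(x_{0},t_{0})-a(t_{0})\Delta_{x}v_{\varepsilon}(x_{0},t_{0})\ge 0$ because $a(t_{0})\ge 0$, contradicting $\lozenge_{a}v_{\varepsilon}<0$. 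Hence $v_{\varepsilon}$ attains its maximum on $\mathcal P$, where $v_{\varepsilon}=-\varepsilon t\le 0$, so $w\le \varepsilon t\le \varepsilon T$ throughout $\overline{\Omega}\times[0,T]$; letting $\varepsilon\to 0$ gives $w\le 0$. Applying the same reasoning to $-w$ gives $w\ge 0$, so $w\equiv 0$, i.e. $u_{1}=u_{2}$.

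\emph{Main obstacle.} The existence part is essentially bookkeeping once Theorems~\ref{thm3.4}, \ref{thm3.3}, \ref{thm3.6} and Corollary~\ref{coro3.5} are in place, so the real content lies in the uniqueness step, i.e. in establishing the weak maximum principle for the degenerate operator $\lozenge_{a}$. The point to watch is precisely the degeneracy: at an interior maximum of $v_{\varepsilon}$ the quantity $-a(t_{0})\Delta_{x}v_{\varepsilon}(x_{0},t_{0})$ remains nonnegative even when $a(t_{0})=0$, which is why the nonnegativity of $a$---assumption (a), under which this whole section is stated---is indispensable and cannot be weakened to assumption (b) (under which the equation may be backward parabolic and uniqueness genuinely fails). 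A secondary, more routine point is the interior smoothness of an arbitrary continuous solution, which is handled by the same time change $s=a_{1}(t)$ reducing $\lozenge_{a}w=0$ to the heat equation.
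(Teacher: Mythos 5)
Your proposal is correct, and its existence half coincides with the paper's argument: compactness of $D$ (Theorem~\ref{thm3.4}), the Riesz theory, and the Volterra-type iteration culminating in \eqref{e3.20} to show that $\big(-\frac{1}{2}I+D\big)\varphi=0$ forces $\varphi=0$, followed by Corollary~\ref{coro3.5} and Theorem~\ref{thm3.6} to turn the unique density into a solution of \eqref{e3.12}--\eqref{e3.14}. Where you genuinely diverge is in the uniqueness half. The paper obtains uniqueness entirely from the boundary integral equation: once the homogeneous equation has only the trivial solution, it declares the existence and uniqueness theorem proved, and Corollary~\ref{crl3.8} is then read off from Theorem~\ref{thm3.7}. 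Strictly speaking this only yields uniqueness within the class of double layer potentials; to conclude that an arbitrary solution $u\in C\big(\overline{\Omega\times(0,T)}\big)$ of the homogeneous problem vanishes, one would still need to know that every such solution is representable by a double layer potential. Your weak maximum principle argument for $\lozenge_{a}$ (comparing $v_{\varepsilon}=w-\varepsilon t$ with its values on the parabolic boundary, using $a(t_{0})\ge 0$ under assumption (a)) closes exactly this gap and is self-contained; it also makes transparent why assumption (a) rather than (b) is needed here. The price is the extra regularity discussion: your time change $s=a_{1}(t)$ is a homeomorphism but its inverse need not be differentiable at the isolated zeros of $a$, so the smoothness claim requires a word of care; however, for a classical solution (which is what the theorem implicitly concerns) the pointwise derivatives needed in the maximum principle are already assumed, so this is a cosmetic rather than substantive issue. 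In short: same existence proof, a different and arguably more complete uniqueness proof.
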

\begin{cor}\label{crl3.8}
	The initial boundary problem \eqref{e3.12}-\eqref{e3.14} with the homogeneous boundary condition has only the trivial solution.
\end{cor}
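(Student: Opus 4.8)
The plan is to obtain Corollary~\ref{crl3.8} as a direct specialization of Theorem~\ref{thm3.7}. First I would set $g\equiv 0$ in the initial boundary value problem \eqref{e3.12}--\eqref{e3.14}; since the zero datum trivially satisfies the compatibility condition \eqref{e3.16}, Theorem~\ref{thm3.7} applies and asserts that the problem has a \emph{unique} solution in $C\big(\overline{\Omega\times(0,T)}\big)$. Then I would exhibit $u\equiv 0$ as one such solution: it clearly satisfies \eqref{e3.12}, \eqref{e3.13} and the homogeneous version of \eqref{e3.14}, and in fact it coincides with the double layer potential $D\varphi$ for the density $\varphi\equiv 0$, which solves the boundary integral equation \eqref{e3.15} with right-hand side $g\equiv 0$. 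By the uniqueness part of Theorem~\ref{thm3.7}, $u\equiv 0$ is the only solution, which is exactly the claim of the corollary.

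Alternatively, and more self-containedly, I would argue directly by a weak maximum principle for the degenerate operator $\lozenge_{a}$. Under assumption (a) we have $a(t)\ge 0$ on $[0,T]$ and, because $a$ vanishes only at isolated points, $a_{1}(t)=\int_{0}^{t}a(z)\,dz$ is absolutely continuous and strictly increasing, hence a homeomorphism of $[0,T]$ onto $[0,a_{1}(T)]$; write $t=t(s)$ for its inverse. Given a solution $u$ of $\lozenge_{a}u=0$ with homogeneous initial and boundary data, the substitution $v(x,s):=u(x,t(s))$ yields $v_{s}-\Delta_{x}v=0$ on $\Omega\times\big(0,a_{1}(T)\big)$ (using $u_{t}=v_{s}\,a(t)$ a.e.\ and cancelling the factor $a(t)>0$), together with $v(\cdot,0)=0$ on $\Omega$ and $v=0$ on $\partial\Omega\times\big(0,a_{1}(T)\big)$. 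The classical maximum principle for the heat equation then forces $v\equiv 0$, hence $u\equiv 0$; changing variables back gives the corollary, and as a by-product re-proves the uniqueness assertion in Theorem~\ref{thm3.7}.

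The only delicate point, and the one I expect to be the main obstacle, is the low regularity of the coefficient: one must check that the change of variables $s=a_{1}(t)$ is legitimate when $a$ merely lies in $L_{1}[0,T]$ and vanishes at isolated points. This is handled by observing that $a_{1}$ is absolutely continuous and strictly monotone under assumption (a), so the chain rule $u_{t}=v_{s}\,a(t)$ holds for a.e.\ $t$, and the exceptional set (where $a=0$) is mapped to a set of measure zero in the $s$-variable; smoothness of $v$ in the interior then upgrades ``$v_{s}-\Delta_{x}v=0$ a.e.'' to ``everywhere.'' Once this is in place, everything else is routine, and the first (one-line) argument via Theorem~\ref{thm3.7} is the preferred presentation.
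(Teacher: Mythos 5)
Your first argument is precisely the paper's (implicit) proof: the corollary is stated without a separate argument as an immediate specialization of Theorem~\ref{thm3.7} to $g\equiv 0$, for which the compatibility condition \eqref{e3.16} is trivially satisfied and $u\equiv 0$ (the double layer potential with density $\varphi\equiv 0$) is visibly a solution; uniqueness from Theorem~\ref{thm3.7} then forces triviality. That part is correct and needs no further comment.

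Your alternative argument via the substitution $s=a_{1}(t)$ and the classical maximum principle is a genuinely different route, and it is worth noting what it buys. The paper's uniqueness in Theorem~\ref{thm3.7} ultimately rests on the Riesz theory for the integral equation \eqref{e3.15}: the iterated-kernel estimate \eqref{e3.20} shows the homogeneous equation $\big(-\tfrac{1}{2}I+D\big)\varphi=0$ has only $\varphi=0$, which most directly yields uniqueness \emph{among solutions representable as double layer potentials}. Your maximum-principle argument gives uniqueness for any classical solution (i.e.\ $C^{2,1}_{x,t}$ in $\Omega\times(0,T)$ and continuous up to the parabolic boundary), independently of the representation, and this is in fact the form of uniqueness the paper later invokes in the proof of Theorem~\ref{thm4.1}. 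The delicate point you flag is handled most cleanly not by the pointwise chain rule (the inverse $t(s)$ need not be differentiable at the images of the zeros of $a$) but by integrating: writing $v(x,s)=u(x,t(s))$ and using that $a_{1}$ is absolutely continuous and strictly increasing under assumption~(a), one has
\begin{equation*}
v(x,s)-v(x,s_{0})=\int_{t(s_{0})}^{t(s)}a(\tau)\,\Delta_{x}u(x,\tau)\,d\tau=\int_{s_{0}}^{s}\Delta_{x}v(x,\sigma)\,d\sigma,
\end{equation*}
and since $\Delta_{x}v(x,\cdot)$ is continuous, the fundamental theorem of calculus gives $v_{s}=\Delta_{x}v$ everywhere, not just a.e.; the weak maximum principle for the heat equation on $\Omega\times\big(0,a_{1}(T)\big)$ then yields $v\equiv 0$. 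With that repair your second argument is complete, and arguably more robust than the deduction from Theorem~\ref{thm3.7}; the one-line deduction remains the presentation consistent with the paper.
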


\section{Trace formulae}\label{sec4}
\begin{thm}\label{thm4.1}
	Let $a(t)$ satisfy the assumption (a). Then
	for any $f(x,t)\in C\big(\overline{\Omega\times(0,T)}\big)$, $\operatorname{supp}f(\cdot,t)\subset \Omega$, $t\in [0,T]$, 
	the volume potential with the density $f$ \eqref{volume}  satisfies the boundary condition
	\begin{eqnarray}\label{bcofV}
	\begin{aligned}
	-\frac{u(x,t)}{2}+\int_{0}^{t}\int_{\partial\Omega}\frac{\partial\varepsilon_{n, a}(x-\xi,b(t,\tau))}{\partial\nu(\xi) }a(\tau)u(\xi,\tau)dS_{\xi}d\tau\\
		-\int_{0}^{t}\int_{\partial\Omega}\varepsilon_{n, a}(x-\xi,b(t,\tau))a(\tau)\frac{\partial u(\xi,\tau )}{\partial\nu(\xi) }dS_{\xi}d\tau=0,
		\end{aligned}
	\end{eqnarray}
	for all  $(x,t)\in \partial\Omega\times(0,T)$.
	
	Conversely, if $u(x,t)\in C_{x,t}^{2,1}(\Omega\times(0,T))\cap C(\Omega\times[0,T))\cap C_{x,t}^{1,0}(\overline{\Omega}\times(0,T)) $ is a solution of the equation 
	\begin{equation}\label{equV}
	\lozenge_{a}u = f\quad \text{in}~\Omega\times(0,T),
	\end{equation}
	satisfying the initial condition 
	\begin{equation}\label{conV}
	u(\cdot,0) = 0\quad \text{in}~\Omega,
	\end{equation}
	 and the lateral boundary condition \eqref{bcofV} for all $x\in \partial\Omega,~t \in (0,T)$, then it has a  unique solution $u$, which coincides with the volume potential.
\end{thm}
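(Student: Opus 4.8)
The plan is to derive a Green–type representation formula for $\lozenge_{a}$ and then to obtain both assertions of the theorem by letting the space variable approach $\partial\Omega$. \emph{Step 1 (representation).} I would first establish that whenever $u$ belongs to the stated regularity class and solves $\lozenge_{a}u=f$ in $\Omega\times(0,T)$ with $u(\cdot,0)=0$, then
\begin{equation}\label{greenrep}
u(x,t)=(Vf)(x,t)+(S\psi)(x,t)-(Dg)(x,t),\qquad (x,t)\in\Omega\times(0,T),
\end{equation}
where $\psi:=\partial u/\partial\nu$ and $g:=u$ on $\partial\Omega\times(0,T)$, and $Vf,S\psi,Dg$ are the potentials \eqref{volume}, \eqref{singlelayer}, \eqref{doublelayer}. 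To prove \eqref{greenrep}, fix $(x,t)\in\Omega\times(0,T)$ and note that, since $\partial_{s}\varepsilon_{n}=\Delta\varepsilon_{n}$ and $b(t,\tau)=a_{1}(t)-a_{1}(\tau)$, the function $(\xi,\tau)\mapsto\varepsilon_{n,a}(x-\xi,b(t,\tau))$ satisfies the adjoint relation $\partial_{\tau}\varepsilon_{n,a}+a(\tau)\Delta_{\xi}\varepsilon_{n,a}=0$ for $\tau<t$. Applying Green's second identity in $\xi\in\Omega$ (justified by an exhaustion of $\Omega$ by smooth subdomains) together with integration by parts in $\tau$ over $\Omega\times(\delta,t-\epsilon)$, and inserting $\lozenge_{a}u=f$, the two interior volume integrals cancel because of the adjoint relation; the temporal term at $\tau=\delta$ tends to $0$ as $\delta\to0^{+}$ since $u(\cdot,\delta)\to u(\cdot,0)=0$ while $\varepsilon_{n,a}(x-\cdot,b(t,\delta))$ stays bounded in $L^{1}(\Omega)$; the term at $\tau=t-\epsilon$ tends to $u(x,t)$ as $\epsilon\to0^{+}$ because $b(t,t-\epsilon)\to0$ makes $\varepsilon_{n,a}(x-\cdot,b(t,t-\epsilon))$ an approximate identity at $\xi=x$ (the computation behind \eqref{pr2}) and $u$ is continuous; and the volume integral converges to $(Vf)(x,t)$ since $\operatorname{supp}f(\cdot,\tau)\subset\Omega$. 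The two boundary integrals over $\tau\in(0,t)$ converge absolutely because $x$ is interior, so $|x-\xi|$ is bounded away from $0$ on $\partial\Omega$ and $\varepsilon_{n,a},\partial_{\nu}\varepsilon_{n,a}$ decay exponentially as $\tau\to t$.

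\emph{Step 2 (direct part).} For the direct assertion I would apply \eqref{greenrep} to $u=Vf$. By Theorem \ref{thm2.2} this function lies in the stated class: it solves $\lozenge_{a}u=f$ with $u(\cdot,0)=0$, and $\nabla_{x}(Vf)$ equals the absolutely convergent integral of $\nabla_{x}\varepsilon_{n,a}(x-\xi,b(t,\tau))f(\xi,\tau)$, whose kernel satisfies the same weak-singularity bound as in the proof of Theorem \ref{thm3.1} once $\tfrac12<\gamma<1$, so $\nabla_{x}(Vf)$ extends continuously to $\overline{\Omega}$ and $\partial_{\nu}(Vf)$ on $\partial\Omega$ is well defined. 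Since $\lozenge_{a}(Vf)=f$ and $(Vf)(\cdot,0)=0$, identity \eqref{greenrep} collapses to $(S(\partial_{\nu}Vf))(x,t)=(D(Vf))(x,t)$ for $x\in\Omega$. Letting $x\to x_{0}\in\partial\Omega$ and using continuity of the single layer potential across $\partial\Omega$ (Theorem \ref{thm3.1}) together with the jump relation for the double layer potential (Theorem \ref{thm3.3}) gives
\[
(S(\partial_{\nu}Vf))(x_{0},t)=-\tfrac12(Vf)(x_{0},t)+\int_{0}^{t}\!\!\int_{\partial\Omega}\frac{\partial\varepsilon_{n,a}(x_{0}-\xi,b(t,\tau))}{\partial\nu(\xi)}a(\tau)(Vf)(\xi,\tau)\,dS_{\xi}\,d\tau,
\]
which, after rearrangement, is exactly \eqref{bcofV} with $u=Vf$.

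\emph{Step 3 (converse and uniqueness).} For the converse, let $u$ be as in the hypotheses, so that \eqref{greenrep} holds with $\psi=\partial_{\nu}u$ and $g=u$ on $\partial\Omega\times(0,T)$. Letting $x\to x_{0}\in\partial\Omega$ in \eqref{greenrep} as in Step 2 (the double layer now carrying the density $g$) gives $u(x_{0},t)=(Vf)(x_{0},t)+(S\psi)(x_{0},t)+\tfrac12 u(x_{0},t)-(Dg)(x_{0},t)$, where $(Dg)(x_{0},t)$ is the convergent improper boundary integral appearing in \eqref{bcofV}; equivalently,
\[
-\tfrac12 u(x_{0},t)+(S\psi)(x_{0},t)-(Dg)(x_{0},t)=-(Vf)(x_{0},t),\qquad (x_{0},t)\in\partial\Omega\times(0,T).
\]
On the other hand, the boundary condition \eqref{bcofV} reads $-\tfrac12 u(x_{0},t)+(Dg)(x_{0},t)-(S\psi)(x_{0},t)=0$. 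Adding the two identities yields $u(x_{0},t)=(Vf)(x_{0},t)$ on $\partial\Omega\times(0,T)$. Hence $w:=u-Vf$ solves $\lozenge_{a}w=0$ in $\Omega\times(0,T)$ with $w(\cdot,0)=0$ in $\Omega$ and $w=0$ on $\partial\Omega\times(0,T)$, and Corollary \ref{crl3.8} — equivalently, the maximum principle for $\lozenge_{a}$, available since $a\ge0$ under assumption (a) — forces $w\equiv0$, i.e. $u=Vf$. Uniqueness for the problem \eqref{equV}, \eqref{conV}, \eqref{bcofV} follows because the difference of any two solutions is such a $w$.

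\emph{The main obstacle} lies entirely in Step 1: the two limit passages need care — that the $\tau=t-\epsilon$ layer collapses to $u(x,t)$ (which requires the approximate-identity argument underlying \eqref{pr2} together with uniform continuity of $u(\cdot,\tau)$ as $\tau\uparrow t$) and that the $\tau=\delta$ layer vanishes (using $u(\cdot,0)=0$ and a bound for $u$ near $\tau=0$). The one further technical input, used in Steps 2 and 3, is the continuous extension of $\nabla_{x}u$ and of $\nabla_{x}(Vf)$ to $\overline{\Omega}$, so that $\partial_{\nu}u$ on $\partial\Omega$ is meaningful and $S(\partial_{\nu}u)$ is a bona fide single layer potential; this rests on the weak-singularity estimate with $\tfrac12<\gamma<1$ already exploited in Theorem \ref{thm3.1}. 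Once \eqref{greenrep} is available, the passage to $\partial\Omega$ and the conclusion are governed wholly by the continuity and jump properties of the layer potentials established in Section \ref{sec3}.
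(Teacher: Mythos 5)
Your proposal follows essentially the same route as the paper: both establish the Green-type representation formula by integrating $\varepsilon_{n,a}(x-\xi,b(t,\tau))\,\lozenge_{a}u$ by parts over $\Omega\times(\delta,t-\epsilon)$ (using the adjoint relation, the approximate-identity behaviour at $\tau=t$, and the zero initial data), then pass to the lateral boundary via the continuity of the single layer potential and the jump relation of Theorem \ref{thm3.3}, and finally reduce the converse to the homogeneous Dirichlet problem settled by Corollary \ref{crl3.8}. Your Step 3 phrases the uniqueness argument slightly differently (adding the traced representation to the boundary condition for $u$ itself, rather than for the difference $\omega=Vf-u_{1}$ as the paper does), but this is only a cosmetic rearrangement of the same computation.
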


Note that the one-dimensional version of this theorem was stated in \cite{SO}.

\begin{proof}[Proof of Theorem \ref{thm4.1}]
 Since $f(x,t)\in C\big(\overline{\Omega\times(0,T)}\big)$ with  $\operatorname{supp}f(\cdot,t)\subset \Omega$ for all $t\in [0,T]$, the volume potential $(Vf)(x,t)$ solves Cauchy problem  \eqref{equV}-\eqref{conV}  by Theorem \ref{thm2.2}. Consider the volume potential
	\begin{align*}
	&(Vf)(x,t)=\int_{0}^{t}\int_{\Omega}
	\varepsilon_{n, a}(x-\xi,b(t,\tau))f(\xi,\tau)d\xi d\tau\\
	&=\int_{0}^{t}\int_{\Omega}\varepsilon_{n, a}(x-\xi,b(t,\tau))\bigg(\frac{\partial }{\partial \tau}-a(\tau)\Delta_{\xi}\bigg)(Vf)(\xi,\tau)d\xi d\tau.
	\end{align*}
	This integral is improper and defined as
	\begin{equation*}
	\lim\limits_{\delta \to 0}(V_{\delta}f)(x,t)=\lim\limits_{\delta \to 0}\int_{0}^{t-\delta}\int_{\Omega}
	\varepsilon_{n, a}(x-\xi,b(t,\tau))f(\xi,\tau)d\xi d\tau,
	\end{equation*}
	where $0<\delta<t$.
	
	We  use $\varepsilon_{n} \big(x-\xi,b(t,\tau)\big)$ in the sequel instead of $\varepsilon_{n,a}(x-\xi,b(t,\tau))$ for convenience. A direct calculation shows that
	\begin{equation}\label{e4.1}
	\begin{aligned}
	&(Vf)(x,t)=\lim\limits_{\delta \to 0} V_{\delta}(f)(x,t)\\
	&=\lim\limits_{\delta \to 0}\int_{0}^{t-\delta}\int_{\Omega}\varepsilon_{n, a}(x-\xi,b(t,\tau))\bigg(\frac{\partial }{\partial \tau}-a(\tau)\Delta_{\xi}\bigg)(Vf)(\xi,\tau)d\xi d\tau\\
	&=\lim\limits_{\delta \to 0}\int_{\Omega}\varepsilon_{n}\big(x-\xi,b(t,t-\delta)\big)(Vf)(\xi,t-\delta)d\xi \\
	&-\lim\limits_{\delta \to 0}\int_{\Omega}\varepsilon_{n}\big(x-\xi,b(t,0)\big)(Vf)(\xi,0)d\xi \\
	& +\lim_{\delta\to 0}\int_{0}^{t-\delta}
	\int_{\partial\Omega}\frac{\partial \varepsilon_{n}\big(x-\xi,
		b(t,\tau)\big)}{\partial\nu(\xi)}a(\tau)(Vf)(\xi,\tau)dS_{\xi} d\tau \\
	&-\lim_{\delta\to 0}\int_{0}^{t-\delta}
	\int_{\partial\Omega}\varepsilon_{n} 
	\big(x-\xi,b(t,\tau)\big)a(\tau)\frac{\partial (Vf)(\xi,\tau)}{\partial\nu(\xi)}dS_{\xi}d\tau \\
	&+\lim_{\delta\to 0}\int_{0}^{t-\delta}\int_{\Omega}(Vf)(\xi,\tau)
	\bigg(-\frac{\partial}{\partial\tau}-a(\tau)\Delta_{\xi}\bigg)\varepsilon_{n} 
	\big(x-\xi,b(t,\tau)\big)d\xi d\tau \\
	&=:I_{1}-I_{2}+I_{3}-I_{4}+I_{5},
	\end{aligned}
	\end{equation}	
	with the trivial definitions of $I_{i},~i=1,...,5$. Using the property of the fundamental solution of the heat operator such that $\varepsilon_{n}(x-\xi,t)\to \delta(x-\xi) ~\mbox{ with }~ t \to 0+$, we have
	\begin{align*}
	I_{1}&:=\lim_{\delta\to 0}\int_{\Omega} \varepsilon_{n} \big(x-\xi,b(t,t-\delta)\big)(Vf)
	(\xi,t-\delta)d\xi\\
	&=\int_{\Omega}\delta(x-\xi)(Vf)
	(\xi,t)d\xi=(Vf)(x,t).
	\end{align*} 
	Taking into account that $(Vf)(\cdot,0)=0$ in $\Omega$ , we get 
	\begin{equation*}
	I_{2}:=\lim\limits_{\delta \to 0}\int_{\Omega}\varepsilon_{n}\big(x-\xi,b(t,0)\big)(Vf)(\xi,0)d\xi =0.
	\end{equation*} 
	The integrals $I_{3},~I_{4}$ have limits as $\delta \to 0$
	\begin{equation*}
	\begin{aligned}
	&I_{3}:=\lim_{\delta\to 0}\int_{0}^{t-\delta}
	\int_{\partial\Omega}\frac{\partial \varepsilon_{n}\big(x-\xi,
		b(t,\tau)\big)}{\partial\nu(\xi)}a(\tau) (Vf)(\xi,\tau)dS_{\xi}d\tau\\ 
	&=\int_{0}^{t}
	\int_{\partial\Omega}\frac{\partial \varepsilon_{n}\big(x-\xi,
		b(t,\tau)\big)}{\partial\nu(\xi)}a(\tau) (Vf)(\xi,\tau)dS_{\xi}d\tau , \\
	&I_{4}:=\lim_{\delta\to 0}\int_{0}^{t-\delta}
	\int_{\partial\Omega}\varepsilon_{n} 
	\big(x-\xi,b(t,\tau)\big)a(\tau) \frac{\partial (Vf)(\xi,\tau)}{\partial\nu(\xi)}dS_{\xi} d\tau\\
	&=\int_{0}^{t}
	\int_{\partial\Omega}\varepsilon_{n}\big(x-\xi,b(t,\tau)\big)a(\tau)\frac{\partial (Vf)(\xi,\tau)}{\partial\nu(\xi)}dS_{\xi}d\tau. 
	\end{aligned}
	\end{equation*}
	Obviously, we have  
	\begin{equation*}
	I_{5}:=\lim_{\delta\to 0}\int_{0}^{t-\delta}\int_{\Omega}(Vf)(\xi,\tau)
	\bigg(-\frac{\partial}{\partial\tau}-a(\tau)\Delta_{\xi}\bigg)\varepsilon_{n} 
	\big(x-\xi,b(t,\tau)\big)d\xi d\tau=0.
	\end{equation*} 
	Taking into account \eqref{e4.1}, for all $(x,t)\in \Omega\times (0,T)$, we obtain
	\begin{equation}\label{e4.2}
	\begin{aligned}
	&I_{Vf}(x,t):=I_{3}-I_{4}=\int_{0}^{t}
	\int_{\partial\Omega}\bigg( \frac{\partial \varepsilon_{n}\big(x-\xi,
		b(t,\tau)\big)}{\partial\nu(\xi)}a(\tau)(Vf)(\xi,\tau) \\
	&-\varepsilon_{n}\big(x-\xi,b(t,\tau)\big)a(\tau)\frac{\partial  (Vf)(\xi,\tau)}{\partial\nu(\xi)}\bigg) dS_{\xi}d\tau=0.	
	\end{aligned}
	\end{equation}
	Using the jump relation of the double layer potential to \eqref{e4.2} with $x \to \partial\Omega$, we  arrive at
	\begin{align*}
	&I_{Vf}(x,t)|_{(x,t)\in \partial\Omega\times(0,T)}=-\frac{(Vf)(x,t)}{2}+\int_{0}^{t} 
	\int_{\partial\Omega}\bigg( \frac{\partial \varepsilon_{n}\big(x-\xi,
		b(t,\tau)\big)}{\partial\nu(\xi)}a(\tau)(Vf)(\xi,\tau)\\
	&-\varepsilon_{n}\big(x-\xi,b(t,\tau)\big)a(\tau)\frac{\partial (Vf)(\xi,\tau)}{\partial\nu(\xi)}\bigg) dS_{\xi}d\tau=0.	
	\end{align*}

	Now we show that if $u_{1}\in C_{x,t}^{2,1}(\Omega\times(0,T))\cap C(\Omega\times[0,T))\cap C_{x,t}^{1,0}(\overline{\Omega}\times(0,T))$ is a solution of problem \eqref{equV}, \eqref{conV}, \eqref{bcofV},  then $u_{1}$ is represented by the volume potential with the density $f$. If not, then  we assume that  $Vf$ and $u_{1}$  are solutions of problem \eqref{equV}, \eqref{conV}, \eqref{bcofV}. Then their difference $\omega=Vf-u_{1}$ must satisfy
	\begin{equation}\label{e4.3}
	\begin{aligned}
	&\lozenge_{a}\omega=0 \quad \text{in}~ \Omega\times(0,T),\\
	&\omega(\cdot,0)=0 \quad \text{in}~ \Omega,
	\end{aligned}
	\end{equation}
	and the boundary condition
	\begin{equation}\label{e4.4}
	\begin{aligned}
	&-\frac{\omega(x,t)}{2}+\int_{0}^{t}\int_{\partial\Omega}\frac{\partial\varepsilon_{n, a}(x-\xi,b(t,\tau))}{\partial\nu(\xi) }a(\tau)\omega(\xi,\tau)dS_{\xi}d\tau\\
	&-\int_{0}^{t}\int_{\partial\Omega}\varepsilon_{n, a}(x-\xi,b(t,\tau))a(\tau)\frac{\partial \omega(\xi,\tau )}{\partial\nu(\xi) }dS_{\xi}d\tau=0,
	\end{aligned}
	\end{equation}
	for all $(x,t)\in \partial\Omega\times(0,T)$. Since $f= 0$ in $\Omega\times(0,T)$, the representation formula \eqref{e4.1} has the form
	\begin{equation}\label{e4.5}
	\begin{aligned}
	&-\omega(x,t)=\int_{0}^{t} 
	\int_{\partial\Omega}\bigg( \frac{\partial \varepsilon_{n}\big(x-\xi,
		b(t,\tau)\big)}{\partial\nu(\xi)}a(\tau)\omega(\xi,\tau) \\
	&-\varepsilon_{n}\big(x-\xi,b(t,\tau)\big)a(\tau)\frac{\partial \omega(\xi,\tau)}{\partial\nu(\xi)}\bigg) dS_{\xi}d\tau,	
	\end{aligned}
	\end{equation}
	for all $(x,t)\in \partial\Omega\times(0,T)$. Using the jump relation of the double layer potential to \eqref{e4.5} with $x\to \partial\Omega$, we obtain
	\begin{equation}\label{e4.6}
	\begin{aligned}
	&-\omega(x,t)=-\frac{\omega(x,t)}{2}+\int_{0}^{t}
	\int_{\partial\Omega}\bigg( \frac{\partial \varepsilon_{n}\big(x-\xi,
		b(t,\tau)\big)}{\partial\nu(\xi)}a(\tau) \omega(\xi,\tau) \\
	&-\varepsilon_{n}\big(x-\xi,b(t,\tau)\big)a(\tau) \frac{\partial \omega(\xi,\tau)}{\partial\nu(\xi)}\bigg)dS_{\xi}d\tau,
	\end{aligned}
	\end{equation}
	where $(x,t)\in\partial\Omega\times (0,T)$.

	Comparing \eqref{e4.6} with \eqref{e4.4}, we conclude that 
	\begin{equation}\label{e4.7}
	\omega=0 \quad \mbox{  on }{\tiny }\partial\Omega\times(0,T).
	\end{equation}
	Corollary  \ref{crl3.8}  provides the existence of the unique trivial solution of the initial boundary value problem \eqref{e4.3}, \eqref{e4.7}. We have proved its uniqueness.	
\end{proof}
As in \cite{K8} we state the following theorem for the Poisson integral.
\begin{thm}\label{thm4.2}
	Let $a(t)$ satisfy the assumption (a) and $\varphi\in C(\Omega)$ with $\operatorname{supp}(\varphi)\subset \Omega$.
	Then the Poisson integral with the density $\varphi$ \eqref{Poisson} satisfies the lateral boundary condition
	\begin{eqnarray}\label{bcofS}
	\begin{aligned}
	 -\frac{u(x,t)}{2}+\int_{0}^{t}\int_{\partial\Omega}\frac{\partial\varepsilon_{n, a}(x-\xi,b(t,\tau))}{\partial\nu(\xi) }a(\tau)u(\xi,\tau)dS_{\xi}d\tau \\
		-\int_{0}^{t}\int_{\partial\Omega}\varepsilon_{n, a}(x-\xi,b(t,\tau))a(\tau)\frac{\partial u(\xi,\tau )}{\partial\nu(\xi) }dS_{\xi}d\tau=0,
		\end{aligned}
		\end{eqnarray}
	for all $x\in \partial\Omega,~t \in (0,T)$.
	
	Conversely, if $u(x,t)\in C_{x,t}^{2,1}(\Omega\times(0,T))\cap C(\Omega\times[0,T))\cap C_{x,t}^{1,0}(\overline{\Omega}\times(0,T))$ is a solution of
	\begin{equation}\label{equP}
	\lozenge_{a}u=0\quad\text{in}~\Omega\times(0,T),
	\end{equation}
	satisfying the initial condition
	\begin{equation}\label{conP}
	u(\cdot,0)=\varphi \quad \text{in} ~\Omega,
	\end{equation}
	and the lateral boundary condition \eqref{bcofS}
	for all $x\in \partial\Omega,~t \in (0,T)$, then $u$ coincides with the Poisson integral.
\end{thm}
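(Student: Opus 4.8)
The plan is to mirror the proof of Theorem~\ref{thm4.1}, with the Poisson integral in place of the volume potential; the only structural difference is that the non-trivial initial datum $\varphi$ makes the two ``volume'' terms in the Green representation cancel, instead of one of them vanishing.

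\emph{Necessity of \eqref{bcofS}.} Extending $\varphi$ by $0$ gives a continuous bounded function on $\Rn$, since the compact set $\operatorname{supp}\varphi$ is at positive distance from $\partial\Omega$; hence Theorem~\ref{thm2.3} applies and $u:=P\varphi$ solves $\lozenge_a u=0$ in $\Omega\times(0,T)$, lies in $C^{\infty}$ there, satisfies $u(\cdot,0)=\varphi$ in $\Omega$, and, by the representation $(P\varphi)(x,t)=\pi^{-n/2}\int_{\Rn}\varphi\big(x+2\sqrt{a_1(t)}z\big)e^{-|z|^{2}}dz$, is moreover smooth up to $\overline\Omega$ for each fixed $t>0$. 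Fix $(x,t)\in\Omega\times(0,T)$ and $0<\delta<t$. From $\partial_\tau b(t,\tau)=-a(\tau)$ and the heat equation $\partial_s\varepsilon_n=\Delta\varepsilon_n$ one obtains the adjoint relation $\partial_\tau\varepsilon_n(x-\xi,b(t,\tau))+a(\tau)\Delta_\xi\varepsilon_n(x-\xi,b(t,\tau))=0$ for $\tau<t$, hence the pointwise identity (in the variables $(\xi,\tau)$)
\begin{equation*}
\varepsilon_n\,\lozenge_a u=\frac{\partial}{\partial\tau}\big(\varepsilon_n u\big)-a(\tau)\big(\varepsilon_n\Delta_\xi u-u\Delta_\xi\varepsilon_n\big),
\end{equation*}
whose left-hand side vanishes. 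Integrating this over $\Omega\times(0,t-\delta)$, using Green's second identity in $\xi$ and the fundamental theorem of calculus in $\tau$, and then letting $\delta\to0$ (so that $b(t,t-\delta)\to0$ and $\varepsilon_n(x-\cdot,b(t,t-\delta))\to\delta(x-\cdot)$, exactly as for the term $I_1$ in Theorem~\ref{thm4.1}, while the lateral-boundary integrals converge as improper integrals in $\tau$ by the weak-singularity bounds \eqref{e3.11}, \eqref{est1} of Section~\ref{sec3}, cf. Theorem~\ref{thm3.1}) we arrive at
\begin{equation*}
(P\varphi)(x,t)-\int_\Omega\varepsilon_n\big(x-\xi,a_1(t)\big)\varphi(\xi)\,d\xi=\int_0^{t}\int_{\partial\Omega}a(\tau)\left(\varepsilon_n\frac{\partial(P\varphi)}{\partial\nu(\xi)}-(P\varphi)\frac{\partial\varepsilon_n}{\partial\nu(\xi)}\right)dS_\xi\,d\tau.
\end{equation*}
Because $\operatorname{supp}\varphi\subset\Omega$, the second term on the left equals $(P\varphi)(x,t)$ by \eqref{Poisson}, so the left-hand side is $0$; thus, for all $(x,t)\in\Omega\times(0,T)$,
\begin{equation*}
\int_0^{t}\int_{\partial\Omega}\frac{\partial\varepsilon_{n,a}(x-\xi,b(t,\tau))}{\partial\nu(\xi)}a(\tau)(P\varphi)(\xi,\tau)\,dS_\xi\,d\tau-\int_0^{t}\int_{\partial\Omega}\varepsilon_{n,a}(x-\xi,b(t,\tau))a(\tau)\frac{\partial(P\varphi)(\xi,\tau)}{\partial\nu(\xi)}\,dS_\xi\,d\tau=0.
\end{equation*}
The first integral is the double layer potential \eqref{doublelayer} with density $(P\varphi)|_{\partial\Omega\times[0,T]}$ and the second is the single layer potential \eqref{singlelayer} with density $\partial(P\varphi)/\partial\nu|_{\partial\Omega\times[0,T]}$; letting $x\to x_0\in\partial\Omega$, the jump relation of Theorem~\ref{thm3.3} applied to the former and the continuity of Theorem~\ref{thm3.1} applied to the latter turn this identity into precisely \eqref{bcofS} with $u=P\varphi$.

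\emph{Sufficiency.} Let $u$ in the stated class solve \eqref{equP}, \eqref{conP} and \eqref{bcofS}; note that the traces $u|_{\partial\Omega\times(0,T)}$ and $\partial u/\partial\nu|_{\partial\Omega\times(0,T)}$ exist and are continuous by hypothesis. Since $P\varphi$ is also such a solution by the previous step, $w:=u-P\varphi$ satisfies $\lozenge_a w=0$ in $\Omega\times(0,T)$, $w(\cdot,0)=0$ in $\Omega$, and --- as \eqref{bcofS} is linear in $u$ --- the homogeneous version of \eqref{bcofS}. Repeating the Green-identity computation with $w$ in place of $P\varphi$, the time-zero term $\int_\Omega\varepsilon_n(x-\xi,a_1(t))w(\xi,0)\,d\xi$ now vanishes and one gets, for $(x,t)\in\Omega\times(0,T)$,
\begin{equation*}
-w(x,t)=\int_0^{t}\int_{\partial\Omega}\frac{\partial\varepsilon_{n,a}(x-\xi,b(t,\tau))}{\partial\nu(\xi)}a(\tau)w(\xi,\tau)\,dS_\xi\,d\tau-\int_0^{t}\int_{\partial\Omega}\varepsilon_{n,a}(x-\xi,b(t,\tau))a(\tau)\frac{\partial w(\xi,\tau)}{\partial\nu(\xi)}\,dS_\xi\,d\tau.
\end{equation*}
Letting $x\to x_0\in\partial\Omega$, the left-hand side tends to $-w(x_0,t)$, while by Theorems~\ref{thm3.1} and \ref{thm3.3} the right-hand side tends to $-\tfrac12 w(x_0,t)$ plus the boundary-integral combination at $x_0$, which equals $\tfrac12 w(x_0,t)$ by the homogeneous boundary condition for $w$; hence $w(x_0,t)=0$ for all $x_0\in\partial\Omega$ and $t\in(0,T)$. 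Therefore $w$ solves the homogeneous equation \eqref{e3.12} with the homogeneous data \eqref{e3.13}, \eqref{e3.14}, so $w\equiv0$ by Corollary~\ref{crl3.8}, i.e. $u=P\varphi$.

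\emph{Expected main obstacle.} The conceptual argument is an exact parallel of Theorem~\ref{thm4.1}; the genuinely technical points are the justification of the limit $\delta\to0$ inside the lateral-boundary integrals (improper in $\tau$), which rests on the weak-singularity estimates \eqref{e3.11}, \eqref{est1}, and the verification that $P\varphi$ has the regularity up to $\overline\Omega$ for $t>0$ needed to apply Green's identity and to take the boundary traces entering the layer potentials --- both already available from Theorems~\ref{thm2.3}, \ref{thm3.1}, \ref{thm3.3} and the convolution structure of $P\varphi$. The one new feature relative to Theorem~\ref{thm4.1} is the cancellation of the two volume terms in the Green representation, which here plays the role that the vanishing of $I_2$ does there.
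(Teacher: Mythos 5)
Your proposal is correct and follows essentially the same route as the paper: the Green/adjoint identity over $\Omega\times(0,t-\delta)$, the cancellation of the two volume terms $J_1=J_2=(P\varphi)(x,t)$ (using $b(t,0)=a_1(t)$ and $\operatorname{supp}\varphi\subset\Omega$), the passage to the lateral boundary via the double-layer jump relation of Theorem~\ref{thm3.3} together with the continuity of the single layer, and the uniqueness step for $w=u-P\varphi$ via the representation formula and Corollary~\ref{crl3.8}. Your write-up is in fact slightly more explicit than the paper's (which defers the converse to ``the same as in the case of the volume potential''), but there is no substantive difference in method.
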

\begin{proof}[Proof of Theorem \ref{thm4.2}]
	We use $\varepsilon_{n}\big (x-\xi,b(t,\tau)\big)$ instead of $\varepsilon_{n,a}(x-\xi,t-\tau)$ for convenience.
	For $x\in \Omega$ and $0<\delta<t$, it is easy to check that
	\begin{equation} \label{e4.9}
	\begin{aligned}
	&0=\lim_{\delta\to 0}
	\int_{0}^{t-\delta}\int_{\Omega} \varepsilon_{n,a} 
	(x-\xi,t-\tau) \bigg(\frac{\partial}{\partial\tau}-a(\tau)\Delta_{\xi}\bigg) 
	(P\varphi)(\xi,\tau)d\xi d\tau \\
	&=\lim_{\delta\to 0}\int_{\Omega} \varepsilon_{n} \big(x-\xi,b(t,t-\delta)\big)(P\varphi)
	(\xi,t-\delta)d\xi \\
	&\quad -\lim_{\delta\to 0}\int_{\Omega} \varepsilon_{n}\big (x-\xi,b(t,0)\big)(P\varphi)
	(\xi,0)d\xi \\
	&\quad +\lim_{\delta\to 0}\int_{0}^{t-\delta} 
	\int_{\partial\Omega}\frac{\partial \varepsilon_{n}\big(x-\xi,
		b(t,\tau)\big)}{\partial\nu(\xi)}a(\tau)(P\varphi)(\xi,\tau)dS_{\xi}d\tau \\
	&\quad -\lim_{\delta\to 0}\int_{0}^{t-\delta} 
	\int_{\partial\Omega}\varepsilon_{n}\big
	(x-\xi,b(t,\tau)\big)a(\tau)\frac{\partial (P\varphi)(\xi,\tau)}{\partial\nu(\xi)}dS_{\xi}d\tau\\
	&\quad +\lim_{\delta\to 0}\int_{0}^{t-\delta}\int_{\Omega}(P\varphi)(\xi,\tau)
	\bigg(-\frac{\partial}{\partial\tau}-a(\tau)\Delta_{\xi}\bigg)\varepsilon_{n} \big
	(x-\xi,b(t,\tau)\big)d\xi d\tau \\
	&=:J_{1}-J_{2}+J_{3}-J_{4}+J_{5},
	\end{aligned}
	\end{equation}
	with the definitions of $J_{i},~i=1,...,5$.
	Using the property of the fundamental solution of the heat operator such that $\varepsilon_{n}(x-\xi,t)\to \delta(x-\xi) ~\mbox{ with }~ t \to 0+$, we have 
	\begin{align*}
	J_{1}&:=\lim_{\delta\to 0}\int_{\Omega} \varepsilon_{n}\big (x-\xi,b(t,t-\delta)\big)(P\varphi)
	(\xi,t-\delta)d\xi\\
	&=\int_{\Omega}\delta(x-\xi)(P\varphi)
	(\xi,t)d\xi=(P\varphi)(x,t).
	\end{align*} 
	Since $(P\varphi)(\cdot,0)=\varphi$ , we have 
	
	\begin{equation*}
	J_{2}:=\lim_{\delta\to 0}\int_{\Omega} \varepsilon_{n} \big(x-\xi,b(t,0)\big)(P\varphi)
	(\xi,0)d\xi=(P\varphi)(x,t).
	\end{equation*}
	The integrals $J_{3},~J_{4}$ have limits as $\delta \to 0$
	\begin{align*}
	&J_{3}:=\lim_{\delta\to 0}\int_{0}^{t-\delta} 
	\int_{\partial\Omega}\frac{\partial \varepsilon_{n}\big(x-\xi,
		b(t,\tau)\big)}{\partial\nu(\xi)}a(\tau)(P\varphi)(\xi,\tau)dS_{\xi}d\tau\\
	&=\int_{0}^{t} 
	\int_{\partial\Omega}\frac{\partial \varepsilon_{n}\big(x-\xi,
		b(t,\tau)\big)}{\partial\nu(\xi)}a(\tau)(P\varphi)(\xi,\tau)dS_{\xi} d\tau, \\
	&J_{4}:=\lim_{\delta\to 0}\int_{0}^{t-\delta}
	\int_{\partial\Omega}\varepsilon_{n}\big
	(x-\xi,b(t,\tau)\big)a(\tau)\frac{\partial (P\varphi)(\xi,\tau)}{\partial\nu(\xi)}dS_{\xi}d\tau\\
	&=\int_{0}^{t}
	\int_{\partial\Omega}\varepsilon_{n} \big
	(x-\xi,b(t,\tau)\big)a(\tau)\frac{\partial (P\varphi)(\xi,\tau)}{\partial\nu(\xi)}dS_{\xi}d\tau.
	\end{align*}
	Clearly, 
	\begin{equation*}
	J_{5}:=\lim_{\delta\to 0}\int_{0}^{t-\delta}\int_{\Omega}(P\varphi)(\xi,\tau)
	\bigg(-\frac{\partial}{\partial\tau}-a(\tau)\Delta_{\xi}\bigg)\varepsilon_{n} \big
	(x-\xi,b(t,\tau)\big)d\xi d\tau=0.
	\end{equation*}
	From \eqref{e4.9} we get 
	\begin{align*}
	&I_{P\varphi}(x,t):=J_{3}-J_{4}=\int_{0}^{t} 
	\int_{\partial\Omega}\bigg( \frac{\partial \varepsilon_{n}\big(x-\xi,
		b(t,\tau)\big)}{\partial\nu(\xi)}a(\tau)(P\varphi)(\xi,\tau) \\
	&-\varepsilon_{n}\big(x-\xi,b(t,\tau)\big)a(\tau)\frac{\partial (P\varphi)(\xi,\tau)}{\partial\nu(\xi)}\bigg) dS_{\xi}d\tau=0.
	\end{align*}
	When $x \to \partial\Omega$ applying Theorem \ref{thm3.3}, we derive
	
	\begin{align*}
	&I_{P\varphi}(x,t)|_{(x,t)\in \partial\Omega\times(0,T)}=-\frac{(P\varphi)(x,t)}{2}+\int_{0}^{t} 
	\int_{\partial\Omega}\bigg( \frac{\partial \varepsilon_{n}\big(x-\xi,
		b(t,\tau)\big)}{\partial\nu(\xi)}a(\tau)(P\varphi)(\xi,\tau) \\
	&-\varepsilon_{n}\big(x-\xi,b(t,\tau)\big)a(\tau)\frac{\partial (P\varphi)(\xi,\tau)}{\partial\nu(\xi)}\bigg) dS_{\xi}d\tau.	
	\end{align*}
	The rest of the proof is the same as in the case of the volume potential.
\end{proof}

\end{document}